\newtheorem{theorem}{Theorem}
\newtheorem{proposition}{Proposition}
\newtheorem{corollary}{Corollary}
\newtheorem{lemma}{Lemma}
\newtheorem{observation}{Observation}
\theoremstyle{definition}
\newtheorem*{definition}{Definition}
\newtheorem{example}{Example}
\renewcommand\det{\operatorname{Det}}
\newcommand\dist{\operatorname{Dist}}
\newcommand\aut{\operatorname{Aut}}
\title{Edge Determining Sets and Determining Index }
\author{Sean McAvoy and Sally Cockburn{\thanks{This research was supported by The Monica Odening '05 Student Internship and Research Fund in Mathematics.}}}
\affil{Hamilton College, Clinton NY}
\date{\today}
\begin{document}

\maketitle

\begin{abstract}
    A graph automorphism is a bijective mapping of the vertices that preserves adjacent vertices. A vertex determining set of a graph is a set of vertices such that the only automorphism that fixes those vertices is the identity. The size of a smallest such set is called the determining number, denoted $\det(G)$. The determining number is a parameter of the graph capturing its level of symmetry. We introduce the related concept of an edge determining set and determining index, $\det'(G)$. We prove that $\det'(G) \le \det(G) \le 2\det'(G)$ when $\det(G) \neq 1$ and show both bounds are sharp for infinite families of graphs. Further, we investigate properties of these new concepts, as well as provide the determining index for several families of graphs. 
\end{abstract}

{\bf Keywords}: determining number; distinguishing index; hypercubes.

{\bf Subject Classification:} 05C25, 05C70

\section{Introduction}

In this paper, we focus solely on finite, simple graphs, $G = (V, E)$,  using standard terminology and notation as can be found in \cite{chartrand2012first}.  
 A definition of particular interest for this paper is that an automorphism of a graph is a permutation of the set of vertices that respects vertex adjacency. The set of all automorphisms of a graph under the operation of composition forms a group, denoted by $\aut(G)$. 

Graph theorists often try to categorize graphs by their level of symmetry. Graphs with vertices or edges that are interchangeable display one such type of symmetry. More precisely, a graph is vertex-transitive if for all $u, v\in V(G)$, there exists $\phi \in \aut(G)$ such that $\phi(u) = v$. Edge-transitivity and arc-transitivity are defined similarly. Additionally, we will call a graph edge-flip-invariant if for all $\{u, v\} \in E(G)$, there exists $\phi \in \aut(G)$ such that $\phi(u)=v$ and $\phi(v) = u$. In Section~\ref{sec:comparing}, we will show that for connected graphs, edge-flip-invariance is a stronger condition than vertex-transitivity.

The determining number of a graph is another measure of its symmetry. The motivation is to find the least number of vertices that need to be fixed to break all of the symmetries of a graph. Intuitively, the more vertices that need to be fixed, the more symmetric the graph is.  

\begin{definition}
   A vertex subset $S$ of a graph is a {\it vertex determining set} if the only automorphism  $\phi \in \aut(G)$ that satisfies $\phi(u)=u$ for all $u \in S$ is the identity automorphism. The {\it determining number} of $G$, $\det(G)$, is the size of a smallest such set: $\det(G) = \min\{|S|\mid$ $S$ is a vertex determining set$\}$.
\end{definition}

Some authors use the term fixing number for this parameter, (see \cite{ERWIN20063244}, \cite{GRAPHSGROUPS}), but we will continue to refer to it as the determining number. Another way to break the symmetries of a graph is to assign a color to each vertex in such a way that otherwise interchangeable vertices can be distinguished. (Notice that this need not be a proper vertex coloring, in which adjacent vertices must be assigned different colors.)

\begin{definition} 
The {\it distinguishing number} of a graph $G$, $\dist(G)$, is the minimum number of vertex colors required so that the only automorphism $\phi \in \aut(G)$ that preserves all vertex colors is the identity automorphism.
\end{definition}

 Albertson and Boutin \cite{ALBERTSON2007} established a relationship between the determining number and distinguishing number.

\begin{proposition}\label{prop:distdet}\cite{ALBERTSON2007}
  For all graphs $G$, $\dist(G) \le \det(G) + 1$.
\end{proposition}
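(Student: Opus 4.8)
The plan is to construct an explicit distinguishing coloring using exactly $\det(G)+1$ colors, built directly from a minimum vertex determining set. First I would fix a vertex determining set $S$ with $|S| = \det(G)$, whose existence is guaranteed by the definition of the determining number. The guiding intuition is that fixing vertices (as in the determining number) and assigning them private colors (as in the distinguishing number) should be interchangeable, up to the single extra color needed to mark ``everything else.''

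Next, I would define a vertex coloring $c$ as follows: assign each of the $\det(G)$ vertices in $S$ its own distinct color, and assign every vertex outside $S$ one common additional color. This uses $\det(G)+1$ colors in total, which is the quantity appearing in the desired bound.

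Then I would verify that $c$ is in fact a distinguishing coloring. Suppose $\phi \in \aut(G)$ preserves all colors under $c$. Because each vertex of $S$ receives a color used by no other vertex, a color-preserving $\phi$ must send each $u \in S$ to a vertex of the same color, which is $u$ itself; hence $\phi(u)=u$ for all $u \in S$. Since $S$ is a determining set, the only automorphism fixing $S$ pointwise is the identity, so $\phi$ is the identity. Thus $c$ distinguishes $G$, and since it uses $\det(G)+1$ colors we conclude $\dist(G) \le \det(G)+1$.

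The argument is essentially mechanical once the coloring is chosen, so there is no deep obstacle; the only genuine idea is the coloring construction itself, namely giving the determining set distinct colors precisely so that any color-preserving automorphism is forced to fix $S$ pointwise and thereby inherit the rigidity guaranteed by $S$. The one point worth checking is the boundary case $\det(G)=0$, corresponding to an asymmetric graph: here $S=\emptyset$, the single shared color alone is a distinguishing coloring, and $\dist(G)=1=\det(G)+1$, so the bound continues to hold.
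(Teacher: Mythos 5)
Your proof is correct and follows exactly the argument the paper sketches: give each vertex of a minimum determining set its own color and all remaining vertices one shared color, so that any color-preserving automorphism fixes the determining set pointwise and is therefore the identity. No issues.
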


The idea  behind the proof is that the vertices of a minimum vertex determining set can be colored  with $\det(G)$ distinct colors, and all other vertices with another color. Since the only automorphism that fixes the vertices of the vertex determining set is the identity, this will be a distinguishing coloring with $\det(G)+1$ colors.

In 2013, Kalinowski and Pilśniak \cite{KALINOWSKI2015124} introduced the distinguishing index, based on coloring edges instead of vertices.

\begin{definition}
The {\it distinguishing index} of a graph $G$, $\dist'(G)$, is the minimum number of edge colors so that the only automorphism $\phi \in \aut(G)$ that preserves all edge colors is the identity automorphism.
\end{definition}

There has been further recent research on the distinguishing index (see for example  \cite{CHROMDIST2020}, \cite{AMC1852}, \cite{Lehner_2020}). Motivated by this work, in this paper we extend the concept of determining set from vertices to edges.  

 One complication is the following. An automorphism $\phi \in \aut(G)$ fixes an edge $e =\{u,v\} \in E(G)$ if $\phi(e) = e$, which means $\{\phi(u),\phi(v)\}=\{u,v\}$. In this case, $\phi$ either fixes the endvertices ($\phi(u) = u$ and $\phi(v) = v$) or switches the endvertices ($\phi(u) = v$ and $\phi(v) = u$). The following elementary observations will be useful in our investigation.

\begin{observation}\label{obs:adjedges}
Let $e_1, e_2$ be adjacent edges in a graph $G$. If $\phi \in \aut(G)$ fixes both $e_1$ and $e_2$, then $\phi$ fixes the endvertices of $e_1$ and $e_2$.
\end{observation}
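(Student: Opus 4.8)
The plan is to argue by a short case analysis grounded in the dichotomy recorded just before the observation: an automorphism fixing an edge either fixes both of its endvertices or swaps them. First I would fix notation. Since $e_1$ and $e_2$ are adjacent, they share a common endvertex; write $e_1 = \{u,v\}$ and $e_2 = \{v,w\}$, where $v$ is the shared vertex. Because $G$ is simple and $e_1 \neq e_2$, the three vertices $u$, $v$, $w$ are pairwise distinct; in particular $u \neq w$, and neither $u$ nor $w$ equals $v$.

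The crux is to pin down the image of the shared vertex $v$. Since $\phi$ fixes $e_1$, we have $\phi(v) \in \{u,v\}$; since $\phi$ fixes $e_2$, we have $\phi(v) \in \{v,w\}$. Hence $\phi(v)$ lies in $\{u,v\} \cap \{v,w\}$, and because $u \neq w$ this intersection is exactly $\{v\}$. Therefore $\phi(v) = v$, so the shared vertex cannot be swapped out under either edge-fixing.

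With $\phi(v) = v$ established, injectivity finishes the argument. As $\phi$ fixes $e_1 = \{u,v\}$ and already sends $v$ to $v$, the remaining endvertex must satisfy $\phi(u) = u$: the only alternative, $\phi(u) = v$, would collide with $\phi(v) = v$ and violate injectivity. The identical reasoning applied to $e_2$ gives $\phi(w) = w$. Thus $\phi$ fixes all of $u$, $v$, $w$, which are precisely the endvertices of $e_1$ and $e_2$.

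I do not anticipate a genuine obstacle here; the single point requiring care is observing that the shared vertex cannot be displaced, which hinges on $u \neq w$ — a consequence of the two edges being distinct in a simple graph. Once $\phi(v) = v$ is secured, the fixing of the outer endvertices is immediate from injectivity.
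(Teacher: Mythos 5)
Your proof is correct and is exactly the elementary argument the paper intends; the authors state this as an observation without proof, and your case analysis (pinning down the shared vertex via $\{u,v\}\cap\{v,w\}=\{v\}$, then using injectivity on the outer endvertices) is the standard way to fill it in. No gaps.
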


\begin{observation}\label{obs:difdeg}
Let $e = \{u, v\}$ be an edge in graph $G$ such that $deg(u)\neq deg(v)$. If $\phi \in \aut(G)$ fixes $e$, then $\phi$ fixes the endvertices of $e$.
\end{observation}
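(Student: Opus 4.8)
The plan is to use the dichotomy the authors set up in the paragraph immediately preceding the statement: since $\phi$ fixes the edge $e = \{u,v\}$, we have $\{\phi(u),\phi(v)\} = \{u,v\}$, so $\phi$ either fixes both endvertices ($\phi(u)=u$, $\phi(v)=v$) or swaps them ($\phi(u)=v$, $\phi(v)=u$). The entire task is to rule out the swapping case, and the tool for doing so is the fact that graph automorphisms preserve vertex degrees.

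First I would recall the standard fact that every $\phi \in \aut(G)$ satisfies $deg(\phi(w)) = deg(w)$ for all $w \in V(G)$, since $\phi$ restricts to a bijection between the neighborhood of $w$ and the neighborhood of $\phi(w)$. Next I would argue by contradiction: suppose $\phi$ swaps the endvertices, so that $\phi(u) = v$. Then $deg(u) = deg(\phi(u)) = deg(v)$, contradicting the hypothesis $deg(u) \neq deg(v)$. Hence the swapping case is impossible, and the only surviving possibility is $\phi(u) = u$ and $\phi(v) = v$; that is, $\phi$ fixes the endvertices of $e$, as claimed.

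There is essentially no obstacle here. The whole content is the degree-invariance of automorphisms combined with the two-case analysis already made explicit in the text, so the proof collapses to a single contradiction. The only point worth stating carefully is the degree-preservation property itself, after which the conclusion is immediate.
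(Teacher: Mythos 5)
Your proof is correct and matches the intended (implicit) argument: the paper states this as an elementary observation without proof, and the degree-preservation of automorphisms combined with the fix-or-swap dichotomy is exactly the reasoning it relies on. Nothing is missing.
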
 

The open neighborhood $N(v)$ of a vertex $v$ is the set of all neighbors of $v$; the closed neighborhood of $v$ is $N[v]= \{v\} \cup N(v)$.  Distinct vertices $u$ and $v$ are called nonadjacent twins (respectively, adjacent twins) if $N(u) = N(v)$ (respectively, $N[u]= N[v]$).

\begin{observation}\label{obs:twins}
If $u, v$ are adjacent or nonadjacent twins in a graph $G$, then there exists $\phi \in \aut(G)$ such that $\phi(v) = u$, $\phi(v) = u$, and $\phi$ fixes all the other vertices.
\end{observation}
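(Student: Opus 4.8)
The plan is to exhibit the required automorphism explicitly and then verify it respects adjacency by a short case analysis. The map in question is the transposition swapping $u$ and $v$: define $\phi \colon V(G) \to V(G)$ by $\phi(u) = v$, $\phi(v) = u$, and $\phi(w) = w$ for every $w \in V(G) \setminus \{u,v\}$. This is manifestly a bijection, and in fact an involution, so $\phi^{-1} = \phi$. Hence the only thing left to check is that $\phi$ preserves adjacency, i.e. that $\{x,y\} \in E(G)$ if and only if $\{\phi(x), \phi(y)\} \in E(G)$. Because $\phi = \phi^{-1}$, it suffices to prove that $\phi$ maps edges to edges; the converse implication then follows by applying the same fact to $\phi^{-1}$.

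Next I would split into cases according to how the pair $\{x,y\}$ meets $\{u,v\}$. If neither endpoint lies in $\{u,v\}$, then $\phi$ fixes both, so $\{\phi(x),\phi(y)\} = \{x,y\}$ and there is nothing to prove. If $\{x,y\} = \{u,v\}$, then $\{\phi(x),\phi(y)\} = \{v,u\} = \{u,v\}$, so this pair is fixed setwise; note that for nonadjacent twins it is a non-edge and for adjacent twins an edge, and in either case it maps to itself. The substantive case is when exactly one endpoint lies in $\{u,v\}$, say $x = u$ and $y = w$ with $w \notin \{u,v\}$ (the case $x = v$ being symmetric). Here $\{\phi(x),\phi(y)\} = \{v,w\}$, so I must establish $w \in N(u) \iff w \in N(v)$. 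This is precisely where the twin hypothesis is used: for nonadjacent twins, $N(u) = N(v)$ yields the equivalence at once, while for adjacent twins I would invoke $N[u] = N[v]$ together with $w \notin \{u,v\}$, so that $w \in N(u) \iff w \in N[u] \iff w \in N[v] \iff w \in N(v)$.

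I do not expect a genuine obstacle, as the whole argument is a routine verification that the defining property of twins is exactly what the swap map needs. The only point requiring slight care is the bookkeeping between the open and closed neighborhoods in the adjacent-twin case: one must first restrict to third vertices $w \notin \{u,v\}$ before passing freely between $N(\cdot)$ and $N[\cdot]$, and separately confirm that the edge $\{u,v\}$ itself is handled correctly, which it is since $\phi$ fixes that edge setwise. Assembling the three cases shows $\phi$ preserves edges, hence $\phi \in \aut(G)$, completing the argument.
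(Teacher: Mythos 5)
Your proof is correct and is exactly the routine verification one would expect; the paper states this as an elementary observation and omits the proof entirely. Your case analysis --- in particular the careful handling of the closed neighborhood $N[\cdot]$ versus the open neighborhood $N(\cdot)$ for adjacent twins, and the observation that the pair $\{u,v\}$ itself is fixed setwise --- is complete and matches the intended argument.
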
 

 Any two isolated vertices in a graph $G$ will be nonadjacent twins, so there exists an automorphism switching them and leaving all other vertices fixed. This nontrivial automorphism fixes all edges of $G$.  Similarly, if there is a $K_2$ component of $G$, then its endvertices are adjacent twins. The automorphism switching the endvertices of this component and leaving all other vertices fixed is a nontrivial automorphism that fixes every edge in the graph. In both of these situations, it is not possible to define an edge equivalent of the determining number. With these considerations in mind, we make the following definition.

\begin{definition}
  Let $G$ be a graph with no more than one isolated vertex  and without $K_2$ as a component. An edge subset $T$  of $G$ is an {\it edge determining set} if the only $\phi \in \aut(G)$ that satisfies $\{\phi(u),\phi(v)\}=\{u,v\}$ for all $\{u,v\} \in T$, is the identity automorphism.
  The {\it determining index}, $\det'(G)$, is the size of the smallest such set: $\det'(G) = \min\{|T|\mid$ $T$ is an edge determining set$\}$.
\end{definition}

In other words, $T$ is an edge determining set if fixing every edge of $T$ fixes every vertex of the graph. If  $G$ is an asymmetric graph, meaning it has no nontrivial automorphisms, then both $\det(G) = 0$ and $\det'(G) = 0$. Due to the novelty of an edge determining set, previous authors have referred to a vertex determining set as simply a determining set. Henceforth, we will indicate clearly whether a determining set consists of edges or vertices.

The determining index of a disconnected graph is not as simple as the sum of the determining index of each of its connected components. For example, if $G$ is the union of two isomorphic, asymmetric, connected graphs, then the determining index is $0$ for each connected component, but $\det'(G)=1$, because there is a nontrivial automorphism switching the two components.

Erwin and Harary \cite{ERWIN20063244} observed the corresponding result for determining number of disconnected graphs. 

\begin{proposition}\label{obs:connected}\cite{ERWIN20063244}
Let $G = H_1 \cup ... \cup H_k$, where $H_i$ is a connected component of $G$. Let $P = \{H_i \mid \det(H_i) > 0\}$, $Q = \{H_i \mid \det(H_i) = 0\}$ and $R$ the 
set of isomorphism classes of graphs in $Q$. Then $\det(G) = |Q| - |R|+\sum_{H\in P} \det(H)$.
\end{proposition}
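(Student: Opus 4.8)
The plan is to decompose the problem over the connected components, using the basic fact that any $\phi \in \aut(G)$ permutes the components of $G$ and can only send a component to one isomorphic to it. First I would record the observation that underlies both directions: for any component $C$ of $G$ and any vertex determining set $S$ of $G$, the restriction $S \cap V(C)$ must itself be a vertex determining set of $C$. Indeed, if some nontrivial $\psi \in \aut(C)$ fixed every vertex of $S \cap V(C)$, then extending $\psi$ by the identity on all other components would give a nontrivial automorphism of $G$ fixing $S$, a contradiction. Hence $|S \cap V(C)| \ge \det(C)$ for every component $C$.

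For the lower bound, I would sum this inequality over the components and then sharpen it using swaps. Over the components in $P$ it already gives $\sum_{H \in P}\det(H)$. Over the components in $Q$ it gives nothing on its own (each has $\det = 0$), so here I would argue separately: within a single isomorphism class of $Q$ consisting of $m$ copies $C_1,\dots,C_m$, at most one copy can be disjoint from $S$, since if two copies $C_i,C_j$ both avoided $S$, the automorphism applying the (unique, by asymmetry) isomorphism $C_i \to C_j$ together with its inverse and fixing everything else would be a nontrivial automorphism fixing $S$. Thus each $Q$-class with $m$ copies contributes at least $m-1$ vertices, and summing over the $|R|$ classes gives at least $|Q| - |R|$. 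Here I would stress that the two families combine additively: a fixed vertex in one component does nothing toward determining another, and no automorphism mixes a $P$-component with a $Q$-component because isomorphic graphs have equal determining numbers. This yields $\det(G) \ge |Q| - |R| + \sum_{H \in P}\det(H)$.

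For the matching upper bound, I would construct an explicit determining set $S$ of the claimed size: in each $H \in P$ place a minimum determining set of size $\det(H)$, and in each $Q$-class with copies $C_1,\dots,C_m$ place one vertex in each of $C_1,\dots,C_{m-1}$, leaving $C_m$ unmarked, so that $|S| = \sum_{H\in P}\det(H) + (|Q|-|R|)$. To verify $S$ is determining, take $\phi \in \aut(G)$ fixing $S$. Every component containing a vertex of $S$ is fixed setwise, since a fixed vertex cannot leave its component; this pins every $P$-component and every marked $Q$-copy. The single unmarked copy $C_m$ in each $Q$-class is then also fixed setwise, being the only remaining copy in its class. With every component fixed setwise, $\phi$ restricts to an automorphism of each component fixing the chosen vertices there, which is the identity on each $P$-component and trivially the identity on each asymmetric copy; hence $\phi$ is the identity.

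The step I expect to require the most care is the asymmetric case together with the bookkeeping of the ``$-1$'' per isomorphism class: one must keep the number of components distinct from the number of isomorphism classes, and check both that leaving one copy per class unmarked still forces it to be fixed (upper bound) and that leaving two copies unmarked always permits a swap (lower bound). The remaining structural facts, namely that automorphisms permute isomorphic components and that $P$- and $Q$-components never mix, are routine once stated.
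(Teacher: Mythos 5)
Your proof is correct. The paper cites this proposition from Erwin and Harary without supplying a proof, but its proof of the analogous Lemma~\ref{lemma:connected} for the determining index uses exactly the decomposition you describe --- minimum determining sets inside the $P$-components plus one marked element in all but one copy of each isomorphism class in $Q$ --- and your write-up simply makes the lower bound rigorous (via the restriction-to-components observation and the component-swapping argument) where the paper's version of the analogue only asserts it.
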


An analogous result holds for determining index.

\begin{lemma}\label{lemma:connected}
Let $G = H_1 \cup ... \cup H_k$, where $H_i$ is a connected component of $G$, at most one $H_i = K_1$, and no $H_i = K_2$. Let $P = \{C_i \mid \det'(H_i) > 0\}$, $Q = \{H_i \mid \det'(H_i) = 0\}$ and $R$ the set of isomorphism classes of graphs in $Q$. 
Then $\det'(G) = |Q| - |R|+\sum_{H\in P} \det'(H)$.
\end{lemma}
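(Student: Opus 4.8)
The plan is to establish the formula by proving the two matching inequalities $\det'(G) \le |Q| - |R| + \sum_{H\in P}\det'(H)$ and $\det'(G) \ge |Q| - |R| + \sum_{H\in P}\det'(H)$, following the template of Proposition~\ref{obs:connected} but working with edges throughout. The structural fact underlying everything is that any $\phi \in \aut(G)$ permutes the connected components of $G$, sending each component to an isomorphic one, and restricts to an automorphism of each component it fixes setwise. In particular, $\phi$ can never map a component in $P$ to one in $Q$, since members of $P$ admit nontrivial automorphisms while members of $Q$ do not; hence $\phi$ permutes components within each isomorphism class.

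For the upper bound I would construct an explicit edge determining set $T$. In each component $H_i \in P$ take a minimum edge determining set $T_i$ of size $\det'(H_i)$, and within each isomorphism class of $Q$-components of size $m$ select $m-1$ of the components and place a single edge of each into $T$. The total size is exactly $\sum_{H\in P}\det'(H) + (|Q|-|R|)$. To verify $T$ is determining, suppose $\phi$ fixes every edge of $T$. A fixed edge keeps its endvertices in their own component, so $\phi$ fixes setwise every $P$-component and every marked $Q$-component; since $\phi$ permutes each $Q$-class and fixes all but one member, it fixes the remaining member as well. Restricted to a $Q$-component, $\phi$ must be the identity because these components are asymmetric, and restricted to $H_i \in P$ it fixes all of $T_i$ and is therefore the identity since $T_i$ is determining for $H_i$. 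Thus $\phi = \mathrm{id}$.

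For the lower bound, let $T$ be any edge determining set of $G$. Its restriction $T \cap E(H_i)$ to a component $H_i \in P$ must itself be an edge determining set of $H_i$: otherwise a nontrivial automorphism of $H_i$ fixing $T \cap E(H_i)$, extended by the identity on all other components, would fix $T$ while acting nontrivially on $G$. Hence $T$ contains at least $\det'(H_i)$ edges in each $H_i \in P$. For a $Q$-class of size $m$, if two of its components $C_a, C_b$ contained no edge of $T$, then the automorphism swapping $C_a$ and $C_b$ through an isomorphism (and fixing everything else) would be nontrivial yet fix $T$; so at least $m-1$ of the $m$ components in each class must contain a $T$-edge, contributing at least $|Q|-|R|$ edges overall. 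Since $P$- and $Q$-components are vertex-disjoint, these two lower bounds add, giving $|T| \ge \sum_{H\in P}\det'(H) + (|Q|-|R|)$.

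The main obstacle, as in the vertex case, is the bookkeeping around isomorphic components, and in particular verifying that no extra edges are needed to separate isomorphic components in $P$ (which is why the formula carries no correction term for $P$): fixing the nonempty internal set $T_i$ already forbids $\phi$ from moving $H_i$, since the image of any edge of $T_i$ would land in a different component. A second point requiring the hypotheses is the $Q$-marking argument, which depends on every $Q$-component that must be marked possessing at least one edge. This is exactly where the assumption of at most one $K_1$ and no $K_2$ component is used: $K_1$ is the only edgeless connected graph, so with at most one of them its isomorphism class has size one and contributes $0$ to $|Q|-|R|$ and never needs marking, while every other $Q$-component is connected on at least two vertices and hence has an edge available.
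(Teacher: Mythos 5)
Your proof is correct and follows essentially the same approach as the paper: build the determining set from minimum edge determining sets in the $P$-components plus one marker edge in all but one component of each isomorphism class in $Q$, and count the matching lower bound via restriction and component-swapping automorphisms. In fact your write-up is more careful than the paper's sketch, which asserts both inequalities with little justification; your explicit treatment of the $K_1$/$K_2$ hypotheses and of why no correction term is needed for isomorphic $P$-components fills in details the paper leaves implicit.
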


\begin{proof}
If $H_i \in P$, then clearly we need to fix $\det'(H_i)$ edges to fix all vertices of $H_i$. 

If $H_i \in Q$, then for all $H_j \in Q$ such that $i \neq j$ and $H_i \cong H_j$, there exists an automorphism of $G$ that interchanges the vertices of these two components. Let $k$ be the number of connected components in the isomorphism class of $H_i$. We need to fix at least $k-1$ of these components. Fix one edge of each of the components in the isomorphism class of $H_i$ except one. Then the vertices of all the components in the isomorphism class of $H_i$ are fixed. We need to do this procedure for each distinct isomorphism class in $Q$. Thus, $\det'(G) = |Q| - |R| + \sum_{H\in P} \det'(H)$.
\end{proof}

We now present results for the determining number of several families of graphs for comparison to our later results on the determining index. Greenfield \cite{GREENFIELD2011} proved the following. 

\begin{proposition}\label{prop:detfamilies}\cite{GREENFIELD2011}
  For $n>1$, $\det(P_n) = 1$, $\det(C_n) = 2$, and $\det(K_n) = n-1$. For $n\geq m > 1$, $\det(K_{n,m}) = n+m-2$.
\end{proposition}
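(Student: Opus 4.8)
The plan is to handle each of the four families separately, and in each case to prove the claimed value by establishing a matching upper and lower bound. For the upper bound I will exhibit an explicit vertex set of the stated size and argue that the only automorphism fixing it pointwise is the identity; for the lower bound I will show that every vertex set of smaller size is fixed pointwise by some nontrivial automorphism, so it cannot be determining. The key structural fact driving the lower bounds is Observation~\ref{obs:twins}: if two vertices form a pair of (adjacent or nonadjacent) twins and both are omitted from $S$, then the transposition swapping them is a nontrivial automorphism fixing every vertex of $S$, so $S$ fails to be determining.

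For $P_n$ with $n>1$, the automorphism group is generated by the single reflection $v_i \mapsto v_{n+1-i}$. Since this is nontrivial, $\det(P_n)\ge 1$; fixing either endpoint rules out the reflection and leaves only the identity, so $\det(P_n)=1$. For $C_n$, I will use that $\aut(C_n)$ is dihedral: a single fixed vertex $v$ still admits the reflection through $v$, forcing $\det(C_n)\ge 2$, while fixing two adjacent vertices kills every nontrivial rotation and the one surviving reflection, giving $\det(C_n)\le 2$.

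For $K_n$ every pair of vertices are adjacent twins and $\aut(K_n)=S_n$. Fixing any $n-1$ vertices forces the last vertex and hence the identity (upper bound), whereas leaving two vertices unfixed permits the transposition swapping them (lower bound), so $\det(K_n)=n-1$. The complete bipartite case is the one I expect to require the most care. Writing the parts as $A$ (size $n$) and $B$ (size $m$), two vertices in the same part are nonadjacent twins, so by the same twin argument any determining set must omit at most one vertex from each part; a pigeonhole count then yields the lower bound $\det(K_{n,m})\ge (n-1)+(m-1)=n+m-2$. For the upper bound I fix all but one vertex in each part. The subtlety, and the main obstacle, is the case $n=m$, where $\aut(K_{n,m})$ additionally contains automorphisms swapping the two parts: I will need to verify that fixing at least one vertex in each part (which holds since $n\ge m>1$) already forbids any part-swapping automorphism, after which the within-part analysis reduces to the $K_n$ argument and forces the identity.
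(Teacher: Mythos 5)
The paper offers no proof of this proposition; it is quoted from Greenfield \cite{GREENFIELD2011}, so there is nothing internal to compare your argument against. Your proposal is a correct, self-contained proof. The two-sided strategy is the standard one, and each piece checks out: the dihedral-group analysis for $P_n$ and $C_n$ is right (one fixed vertex of $C_n$ still admits the reflection through it, and two adjacent fixed vertices kill everything); the twin-transposition lower bound gives $|S\cap A|\ge n-1$ and $|S\cap B|\ge m-1$ for $K_{n,m}$, hence $\det(K_{n,m})\ge n+m-2$, and the same argument handles $K_n$. You also correctly isolate and resolve the only delicate point: when $n=m$, a part-swapping automorphism would have to send a fixed vertex of $A$ into $B$, so fixing even one vertex per part (guaranteed since $n\ge m>1$) forces any such $\phi$ to preserve the parts setwise, after which the within-part twin argument finishes. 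It is worth noting that your lower-bound mechanism is exactly the use the paper later makes of Observation~\ref{obs:twins} when computing $\det'(K_{n,m})$ and $\det'(K_n)$, so your proof is very much in the spirit of the surrounding text.
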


The line graph of $G$, $L(G)$, is a graph such that $V(L(G)) = E(G)$, with vertices $e_1 = \{u,v\}$ and $e_2 = \{x,y\}$ in $L(G)$ being adjacent if the corresponding edges are adjacent in $G$. It is easy to verify that an automorphism $\alpha$ of $G$ induces an automorphism $\alpha'$ of $L(G)$ in the obvious way; for all $e = \{u, v \} \in V(L(G))$, define $\alpha'(e) = \{\alpha(u), \alpha(v)\}$. Rodriguez \cite{Rodriguez2014AUTOMORPHISMGO} showed that except for the three cases shown in Figure~\ref{fig:exceptionalCases}, these induced automorphisms make up the entire automorphism group of $L(G)$.

\begin{proposition}\label{prop:aut}\cite{Rodriguez2014AUTOMORPHISMGO}
  Let $G$ be a connected graph such that $|V(G)| \geq 3$. If $G\notin \{G_1, G_2, G_3\}$, then $\aut(G) \cong \aut(L(G))$.
\end{proposition}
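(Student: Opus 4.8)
The statement is the automorphism-group form of Whitney's line graph theorem, and the natural plan is to analyze the canonical map described just before the proposition. Define $\Phi \colon \aut(G) \to \aut(L(G))$ by $\Phi(\alpha) = \alpha'$, where $\alpha'(\{u,v\}) = \{\alpha(u),\alpha(v)\}$; a one-line check shows $\Phi$ is a group homomorphism, since $(\alpha \circ \beta)'(\{u,v\}) = \{\alpha(\beta(u)),\alpha(\beta(v))\} = (\alpha' \circ \beta')(\{u,v\})$. It therefore suffices to prove that $\Phi$ is both injective and surjective, and I would separate these two tasks because only the second forces the exceptional cases.

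For injectivity, suppose $\Phi(\alpha)$ is the identity of $L(G)$, so that $\alpha$ fixes every edge of $G$ setwise. Because $G$ is connected on at least three vertices, it contains a vertex $w$ with $\deg(w) \ge 2$, and the two edges at $w$ are adjacent and fixed, so Observation~\ref{obs:adjedges} forces $\alpha$ to fix $w$ together with its neighbors along those edges. I would then propagate along the graph: if $u$ is already fixed and $\{u,x\} \in E(G)$, then $\{u,\alpha(x)\} = \{\alpha(u),\alpha(x)\} = \{u,x\}$ gives $\alpha(x) = x$, and connectivity yields $\alpha = \mathrm{id}$. This half uses no hypothesis beyond connectivity and $|V(G)| \ge 3$, so $\aut(G)$ always embeds in $\aut(L(G))$ and the entire content of the theorem lies in surjectivity.

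For surjectivity I would reconstruct $G$ from $L(G)$ through its clique structure (the Krausz--Whitney approach). A set of pairwise-adjacent edges of $G$ either shares a common vertex or forms a triangle, so every maximal clique of $L(G)$ is either a \emph{star} $S_v$, consisting of all edges incident to a fixed vertex $v$ of degree at least two, or arises from a triangle of $G$. The stars encode the vertices of $G$ of degree at least two, with each edge of $G$ lying in the stars of its endpoints. Given $\beta \in \aut(L(G))$, the goal is to show that $\beta$ permutes the collection of stars; once that is established, the induced permutation of $V(G)$ is an automorphism $\alpha$ with $\Phi(\alpha) = \beta$, and the proof is complete.

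The main obstacle is precisely the ambiguity between a triangle-clique and a degree-three star, since both appear as $3$-cliques in $L(G)$ and an automorphism of $L(G)$ could \emph{a priori} interchange the two types and thereby fail to preserve the star cover. I would resolve this by showing that for $|V(G)| \ge 5$ the surrounding clique structure is rigid enough to forbid any such interchange, forcing $\beta$ to preserve the stars, and then completing a finite check over connected graphs on three or four vertices. That check isolates exactly three graphs on four vertices for which the star/triangle ambiguity genuinely creates extra symmetry: the paw (a triangle with a pendant edge), the diamond $K_4 - e$, and $K_4$ itself, whose line graphs have strictly larger automorphism groups (for instance $|\aut(K_4)| = 24$ while its line graph is the octahedron with $|\aut| = 48$). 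These are the graphs $G_1, G_2, G_3$, and once they are excluded $\Phi$ is surjective, hence an isomorphism.
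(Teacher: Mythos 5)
The paper does not prove this proposition; it is quoted from Rodriguez \cite{Rodriguez2014AUTOMORPHISMGO} (it is essentially Whitney's theorem on line-graph automorphisms), so there is no in-paper argument to compare against. Judged on its own terms, your outline follows the standard route, and much of it is sound: the map $\Phi(\alpha)=\alpha'$ is indeed a homomorphism, your injectivity argument is complete and correct (fixing all edges setwise plus connectivity and $|V(G)|\ge 3$ forces the identity, exactly as in the paper's Observation~\ref{obs:adjedges} propagation), and you have correctly identified the three exceptional graphs as the paw, the diamond $K_4-e$, and $K_4$, matching $G_1, G_2, G_3$ in Figure~\ref{fig:specialcase}.

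The genuine gap is in surjectivity, which you yourself flag as ``the entire content of the theorem'' and then do not supply. The claim that for $|V(G)|\ge 5$ ``the surrounding clique structure is rigid enough'' to force $\beta\in\aut(L(G))$ to permute the stars $S_v$ is precisely the uniqueness of the Krausz clique decomposition of a line graph, and that is the hard kernel of Whitney's theorem; asserting rigidity is not a proof of it. Concretely, one must show that a maximal $3$-clique of $L(G)$ coming from a triangle of $G$ cannot be carried by $\beta$ onto one coming from a $K_{1,3}$ (and vice versa) once $G$ is large enough, typically by examining how many further maximal cliques meet a given $3$-clique in exactly one vertex, and this analysis is where the case distinctions actually live. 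Two smaller imprecisions: the star $S_v$ of a degree-$2$ vertex whose neighbors are adjacent is \emph{not} a maximal clique of $L(G)$ (it sits inside the triangle clique), so your dichotomy ``every maximal clique is a star or a triangle'' needs that caveat before you can speak of $\beta$ permuting ``the collection of stars''; and the ``finite check over connected graphs on three or four vertices'' is waved at rather than performed (there are eight such graphs, and you must verify that the canonical map is surjective for the five non-exceptional ones, e.g.\ for $K_3$ and $K_{1,3}$, where $|\aut(G)|=|\aut(L(G))|=6$ and injectivity already forces surjectivity). With the Krausz uniqueness lemma proved and the small cases checked, the argument would be complete.
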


\begin{figure}[h]
\begin{subfigure}{0.3\textwidth}
\centering
\begin{tikzpicture}[main/.style = {draw, circle}] 
  \node[main] (a1) at (0,0) {1};  
  \node[main] (a2) at (2,0)  {2}; 
  \node[main] (a3) at (2,2)  {3};  
  \node[main] (a4) at (0,2) {4};  
  
  \draw (a1) -- (a2); 
  \draw (a2) -- (a3);  
  \draw (a2) -- (a4); 
  \draw (a3) -- (a4);
\end{tikzpicture}
\caption{$G_1$}
\end{subfigure}
\begin{subfigure}{0.3\textwidth}
\centering
\begin{tikzpicture}[main/.style = {draw, circle}] 
 \node[main] (a1) at (1,1) {1};  
  \node[main] (a2) at (3,1)  {2}; 
  \node[main] (a3) at (3,3)  {3};  
  \node[main] (a4) at (1,3) {4};

  \draw (a1) -- (a2); 
  \draw (a2) -- (a3);  
  \draw (a3) -- (a4); 
  \draw (a1) -- (a3);
  \draw (a2) -- (a4);
\end{tikzpicture}
\caption{$G_2$}
\end{subfigure}
\begin{subfigure}{0.3\textwidth}
\centering
\begin{tikzpicture}[main/.style = {draw, circle}] 
  \node[main] (a1) at (0,0) {1};  
  \node[main] (a2) at (2,0)  {2}; 
  \node[main] (a3) at (2,2)  {3};  
  \node[main] (a4) at (0,2) {4};  
  
  \draw (a1) -- (a2); 
  \draw (a2) -- (a3);  
  \draw (a2) -- (a4); 
  \draw (a1) -- (a4);
  \draw (a1) -- (a3);
  \draw (a3) -- (a4);
\end{tikzpicture}
\caption{$G_3$}
\end{subfigure}
\caption{Exceptions to $\aut(G)\cong \aut(L(G))$.}
\label{fig:specialcase}
\end{figure}
\label{fig:exceptionalCases}

 In 2018, Alikhani and Soltani established a relationship between the distinguishing index of a graph and the distinguishing number of the corresponding line graph.

\begin{proposition}\label{prop:distdist'}
\cite{ALIKHANI2018}
 If $G$ is a connected graph such that $|V(G)|\geq3$ and $G \neq G_2$, then $\dist'(G) = \dist(L(G))$.
\end{proposition}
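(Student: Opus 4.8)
The plan rests on the identification $V(L(G)) = E(G)$: a $k$-coloring of the edges of $G$ is literally the same function as a $k$-coloring of the vertices of $L(G)$. Under this identification I would compare the two symmetry conditions directly. A given coloring $c$ is a distinguishing edge-coloring of $G$ precisely when no nontrivial $\phi \in \aut(G)$ satisfies $c(\phi(e)) = c(e)$ for all $e \in E(G)$, while the same $c$ is a distinguishing vertex-coloring of $L(G)$ precisely when no nontrivial $\psi \in \aut(L(G))$ preserves it. The bridge is the induced-automorphism map $\phi \mapsto \phi'$, where $\phi'(\{u,v\}) = \{\phi(u),\phi(v)\}$; the crucial (essentially tautological) point is that, since $\phi'$ acts on $V(L(G)) = E(G)$ exactly as $\phi$ acts on $E(G)$, the automorphism $\phi$ preserves $c$ as an edge-coloring if and only if $\phi'$ preserves $c$ as a vertex-coloring.

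First I would check that $\phi \mapsto \phi'$ is an injective homomorphism whenever $G$ is connected with $|V(G)| \ge 3$. It is plainly a homomorphism, and its kernel consists of automorphisms fixing every edge setwise; applying Observation~\ref{obs:adjedges} to the two edges incident to any vertex of degree at least $2$ (one exists, and by connectivity this then forces every vertex to be fixed) shows the kernel is trivial. Injectivity immediately yields the easy inequality: any distinguishing vertex-coloring of $L(G)$ breaks every induced automorphism and hence is a distinguishing edge-coloring of $G$, so $\dist'(G) \le \dist(L(G))$. For the reverse inequality I would invoke Proposition~\ref{prop:aut}: when $G \notin \{G_1, G_2, G_3\}$ the map $\phi \mapsto \phi'$ is also surjective, so every element of $\aut(L(G))$ is induced; then a distinguishing edge-coloring of $G$ automatically breaks all of $\aut(L(G))$, giving $\dist(L(G)) \le \dist'(G)$ and hence equality on this range.

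It remains to treat the graphs where $\aut(G) \not\cong \aut(L(G))$, and here I would simply pin both sides down, using the already-established inequality $\dist'(G) \le \dist(L(G))$ together with a matching lower bound. For $G_1$ (the paw) one has $\aut(G_1) = \langle (3\,4)\rangle$ and $L(G_1) \cong K_4 - e$, and a short check gives $\dist'(G_1) = \dist(L(G_1)) = 2$; for $G_3 = K_4$ one has $L(K_4) \cong K_{2,2,2}$ (the octahedron), and one finds $\dist'(K_4) = 3$, which via the inequality forces $\dist(K_{2,2,2}) \ge 3$, while $3$ colors trivially suffice, so $\dist'(K_4) = \dist(K_{2,2,2}) = 3$. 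In both cases the extra, non-induced automorphisms of the line graph happen not to raise the distinguishing number, so equality persists despite the failure of the isomorphism. The one genuine exception is $G_2 = K_4 - e$, whose line graph is the wheel $W_4$ on the rim $C_4$: there $\dist(L(G_2)) = 3$ while $\dist'(G_2) = 2$, which is exactly why $G_2$ must be excluded.

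The main obstacle is precisely this exceptional-case analysis. The clean group-theoretic argument delivers equality only off $\{G_1, G_2, G_3\}$, and one must verify by hand that the surjectivity failure does \emph{not} disturb the distinguishing numbers at $G_1$ and $G_3$, whereas it \emph{does} at $G_2$. Everything else — the homomorphism, the injectivity, and the two inequalities — is routine once the identification $V(L(G)) = E(G)$ is in place.
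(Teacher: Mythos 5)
The paper states this proposition as a cited result from Alikhani and Soltani and gives no proof of its own, so there is nothing internal to compare against; judged on its merits, your argument is correct. The core of it (the injective homomorphism $\phi \mapsto \phi'$, the resulting inequality $\dist'(G) \le \dist(L(G))$, and surjectivity via Proposition~\ref{prop:aut} off $\{G_1,G_2,G_3\}$) exactly parallels the paper's own proof of the analogous Theorem~\ref{thm:det'LG} for the determining index, and your hand checks of the exceptional cases are right: $\dist'(G_1)=\dist(K_4-e)=2$, $\dist'(K_4)=\dist(K_{2,2,2})=3$, while $\dist'(G_2)=2<3=\dist(W_4)$, which is precisely why $G_2$ must be excluded. (Incidentally, your values $\dist'(G_1)=\dist'(G_2)=2$ are the correct ones; a constant edge-coloring is preserved by every automorphism, so $\dist'=1$ forces $\aut(G)$ trivial, which neither graph satisfies.)
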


Using Proposition~\ref{prop:aut}, we establish a similar relationship as Proposition~\ref{prop:distdist'} between the determining index of a graph and the determining number of the corresponding line graph.

\begin{theorem}\label{thm:det'LG}
  Let $G$ be a connected graph such that $|V(G)| \geq 3$. Then $\det'(G)=\det(L(G))$ if and only if $G\notin \{G_1, G_2, G_3\}$.
\end{theorem}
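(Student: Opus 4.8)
The plan is to exploit the natural identification of the edges of $G$ with the vertices of $L(G)$. I would begin by setting up the map $\Psi\colon \aut(G)\to\aut(L(G))$ given by $\Psi(\phi)=\phi'$, where $\phi'(\{u,v\})=\{\phi(u),\phi(v)\}$, and recording the one fact on which everything hinges: for an edge $e=\{u,v\}$, the automorphism $\phi$ fixes $e$ setwise, that is, $\{\phi(u),\phi(v)\}=\{u,v\}$, precisely when $\Psi(\phi)$ fixes the vertex $e$ of $L(G)$. This is immediate from the definition of $\phi'$. I would also observe that $\Psi$ is injective whenever $G$ is connected with $|V(G)|\ge 3$: if $\Psi(\phi)$ is the identity, then $\phi$ fixes every edge of $G$ setwise, and since such a $G$ contains a pair of adjacent edges, Observation~\ref{obs:adjedges} together with connectivity forces $\phi$ to fix every vertex, so $\phi=\mathrm{id}$.

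For the forward implication, suppose $G\notin\{G_1,G_2,G_3\}$, so that Proposition~\ref{prop:aut} makes $\Psi$ an isomorphism; by the remark preceding that proposition it is realized by the induced-automorphism map, which is exactly what lets the correspondence above transfer determining sets in both directions. To see $\det(L(G))\le\det'(G)$, I would take a minimum edge determining set $T$ of $G$, regard it as a set of vertices of $L(G)$, and check it is vertex determining: any $\psi\in\aut(L(G))$ fixing all of $T$ equals $\Psi(\phi)$ for some $\phi$ by surjectivity, whence $\phi$ fixes every edge of $T$, whence $\phi=\mathrm{id}$ and $\psi=\mathrm{id}$. For the reverse inequality $\det'(G)\le\det(L(G))$, I would take a minimum vertex determining set $S$ of $L(G)$, regard it as a set of edges of $G$, and check it is edge determining: any $\phi$ fixing all edges of $S$ has $\Psi(\phi)$ fixing all vertices of $S$, so $\Psi(\phi)=\mathrm{id}$, and injectivity gives $\phi=\mathrm{id}$. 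The two inequalities give equality.

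For the reverse implication I would prove the contrapositive, showing equality fails on each of the three exceptional graphs by direct computation of both quantities. Here $L(G_1)$ and $L(G_2)$ turn out to be the diamond $K_4-e$ and the wheel $W_4$, each with determining number $2$, while $\det'(G_1)=\det'(G_2)=1$; and $L(G_3)=L(K_4)$ is the octahedron $K_{2,2,2}$, with determining number $3$, while $\det'(K_4)=2$ (two adjacent edges suffice by Observation~\ref{obs:adjedges}, and one never does). Thus $\det'(G)\neq\det(L(G))$ in all three cases, completing the equivalence. I expect the main work to lie not in the conceptual forward direction, which is a clean transfer through $\Psi$, but in this case analysis: one must pin down the three line graphs and verify each pair of values, taking care that a single fixed edge already leaves a nontrivial automorphism in every exceptional case.
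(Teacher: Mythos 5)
Your proposal is correct and follows essentially the same route as the paper: use the induced-automorphism isomorphism from Proposition~\ref{prop:aut} to transfer a minimum edge determining set of $G$ to a vertex determining set of $L(G)$ and vice versa, then dispose of the three exceptional graphs by computing $\det'(G_i)$ and $\det(L(G_i))$ directly (your identifications of $L(G_1)$, $L(G_2)$, $L(G_3)$ as the diamond, the wheel, and the octahedron, and the values $1,1,2$ versus $2,2,3$, all match the paper). Your explicit remarks that the isomorphism is realized by the induced map and that $\Psi$ is injective for connected $G$ with $|V(G)|\ge 3$ are welcome clarifications of points the paper leaves implicit, but they do not change the argument.
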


\begin{proof}
    
    Assume $G\notin \{G_1, G_2, G_3\}$.
  We will first show $\det'(G)\geq \det(L(G))$. Let $T$ be a minimum edge determining set of $G$ so that $\det'(G) = |T|$. Then $T$ is a set of vertices in $L(G).$ Let $\alpha$ be an automorphism of $L(G)$ such that $\alpha(v) = v$ for all $v \in T$. By Proposition~\ref{prop:aut}, there exists an isomorphism $\phi:\aut(G) \rightarrow \aut(L(G))$, so there exists $\sigma\in \aut(G)$ such that
  $\phi(\sigma) =\alpha$. Hence, for all edges $e = \{u,v\} \in E(G)$, $\alpha(e) = \{\sigma(u),\sigma(v)\}$. But since $\alpha(e)=e$ for all $e \in T$, $\{\sigma(u),\sigma(v)\} = \{u,v\}$ for all $\{u,v\} \in T$. Since $T$ is an edge determining set in $G$, $\sigma$ is the identity in $\aut(G)$. Hence, $\alpha$ is the identity in $\aut(L(G))$. By definition, $T$ is a vertex determining set in $L(G)$. Since $T$ is of minimum size as an edge determining set, $\det'(G)\geq \det(L(G))$.

We will now show $\det(L(G))\geq \det'(G)$. Let $S$ be a minimum vertex determining set of $L(G)$ so that $\det(L(G)) = |S|$. Then $S$ is a set of edges in $G$. Let $\tau \in \aut(G)$ such that $\{\tau(u),\tau(v)\} = \{u, v\}$ for all $\{u, v \}\in S$. By Proposition~\ref{prop:aut},  
there is a unique $\beta \in \aut(L(G))$ such that $\phi(\tau) = \beta$. Then for all $e = \{u, v \}\in S$, $\beta(e)=\{\tau(u),\tau(v)\} = \{u, v\}=e$. Since $S$ is a vertex determining set in $L(G)$, $\beta$ is the identity automorphism of $L(G)$. Since $\phi:\aut(G) \cong \aut(L(G))$ is an isomorphism, the only element of $\ker(\phi)$ is the identity automorphism  of $G$. Thus $\tau$ is the identity and so $S$ is an edge determining set of $G$. Since $S$ is of minimum size as a vertex determining set of $L(G)$, $\det'(G) \le \det(L(G))$. Hence $\det'(G) =\det(L(G))$.

Conversely, assume $G\in \{G_1, G_2, G_3\}$. By inspection, we have $\det'(G_1) = \det'(G_2)=1$, and $\det'(G_3)=2$. For the determining number of their line graphs, shown in Figure~\ref{fig:linegraphs}, we have $\det(L(G_1)) = 2$, $\det(L(G_2))=2$, and $\det(L(G_3))=3$. Hence, if $G \in \{G_1, G_2,  G_3\}$, then $\det'(G) \neq \det(L(G))$.

\end{proof}


\begin{figure}[h]
\begin{subfigure}{0.3\textwidth}
\centering
\begin{tikzpicture}[main/.style = {draw, circle}] 
  \node[main] (a1) at (1,1) {$1,2$};  
  \node[main] (a2) at (3,1)  {$2,4$}; 
  \node[main] (a3) at (3,3)  {$2,3$};  
  \node[main] (a4) at (1,3) {$3,4$};
  
  \draw (a1) -- (a2); 
  \draw (a2) -- (a3);  
  \draw (a3) -- (a4); 
  \draw (a1) -- (a3);
  \draw (a2) -- (a4);
\end{tikzpicture}
\caption{$L(G_1)$}
\end{subfigure}
\begin{subfigure}{0.3\textwidth}
\centering
\begin{tikzpicture}[main/.style = {draw, circle}] 
 \node[main] (a1) at (1,1) {$2,4$};  
  \node[main] (a2) at (3,1)  {$3,4$}; 
  \node[main] (a3) at (3,3)  {$2,3$};  
  \node[main] (a4) at (1,3) {$1,3$};
  \node[main] (a5) at (2,4) {$1,2$};

  \draw (a1) -- (a2); 
  \draw (a2) -- (a3);  
  \draw (a3) -- (a4); 
  \draw (a1) -- (a3);
  \draw (a2) -- (a4);
  \draw (a5) -- (a4);
  \draw (a5) -- (a1);
  \draw (a5) -- (a3);
\end{tikzpicture}
\caption{$L(G_2)$}
\end{subfigure}
\begin{subfigure}{0.3\textwidth}
\centering
\begin{tikzpicture}[main/.style = {draw, circle}] 
  \node[main] (a1) at (0,0) {$1,4$};  
  \node[main] (a2) at (2.5,0)  {$1,3$}; 
  \node[main] (a3) at (2.5,2)  {$2,3$};  
  \node[main] (a4) at (0,2) {$2,4$};
  \node[main] (a5) at (1.25,3) {$1,2$};
  \node[main] (a6) at (1.25,-1) {$3,4$};
  
  \draw (a1) -- (a2); 
  \draw (a2) -- (a3);  
  \draw (a2) -- (a6);
  \draw (a1) -- (a6); 
  \draw (a1) -- (a4);
  \draw (a6) -- (a3);
  \draw (a6) -- (a4);
  \draw (a3) -- (a4);
  \draw (a3) -- (a5);
  \draw (a4) -- (a5); 
  \draw (a1) -- (a5);
  \draw (a2) -- (a5);
\end{tikzpicture}
\caption{$L(G_3)$}
\end{subfigure}
\caption{Line Graphs to the Exceptions}
\label{fig:linegraphs}
\end{figure}

With the connection between $G$ and $L(G)$, we now have a tool to find the determining index, if the determining number of the line graph is known. 

\begin{theorem}\label{thm:PathsCyclesStars}
  For all $n > 2$,  $\det'(P_n)=1$ and for all $n > 1$,   $\det'(C_n)=2$, and $\det'(K_{1,n})=n-1$ . 
\end{theorem}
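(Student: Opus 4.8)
The plan is to reduce all three computations to the line-graph identity established in Theorem~\ref{thm:det'LG}, combined with the known determining numbers from Proposition~\ref{prop:detfamilies}. The key observation is that each of these families has a line graph lying in a family whose determining number is already known: $L(P_n) = P_{n-1}$, since the $n-1$ edges of a path are adjacent exactly when they are consecutive; $L(C_n) = C_n$, since the $n$ edges of a cycle inherit the cyclic adjacency structure; and $L(K_{1,n}) = K_n$, since all $n$ edges of a star share the central vertex and are therefore pairwise adjacent. Establishing these three identities is the first step.

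Next I would verify the hypotheses of Theorem~\ref{thm:det'LG} for each family. The graphs $P_n$ (for $n \geq 3$), $C_n$ (for $n \geq 3$), and $K_{1,n}$ (for $n \geq 2$) are all connected with at least three vertices. It then remains to check by inspection that none of them coincides with the exceptional graphs $G_1, G_2, G_3$. Each exceptional graph has exactly four vertices and, respectively, $4$, $5$, or $6$ edges with a specific structure (a triangle with a pendant vertex, $K_4$ minus an edge, and $K_4$); none of these is a path, cycle, or star, so no member of our three families equals any $G_i$.

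With the hypotheses in place, Theorem~\ref{thm:det'LG} gives $\det'(G) = \det(L(G))$ for each family, and Proposition~\ref{prop:detfamilies} finishes the argument: $\det'(P_n) = \det(P_{n-1}) = 1$ whenever $n-1 > 1$, i.e.\ $n > 2$; $\det'(C_n) = \det(C_n) = 2$; and $\det'(K_{1,n}) = \det(K_n) = n-1$. (Since cycles in simple graphs require $n \geq 3$, the stated range $n > 1$ is effectively $n \geq 3$.)

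I expect the only real care to be needed at the boundaries of each range, and this is where the main obstacle lies rather than in any deep argument. For paths, $n = 3$ produces $L(P_3) = P_2 = K_2$, so I must confirm that $\det(P_2) = 1$ is covered by Proposition~\ref{prop:detfamilies}, which applies for $n > 1$; and the restriction $n > 2$ in the statement is precisely what excludes the degenerate $L(P_2) = P_1 = K_1$ together with the fact that $P_2 = K_2$ is barred from the definition of $\det'$. For stars, the endpoint $n = 2$ gives $K_{1,2} = P_3$, and both formulas agree there since $\det'(P_3) = 1 = 2-1$ and $L(K_{1,2}) = K_2$ with $\det(K_2) = 1$. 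Beyond these endpoint verifications, the proof is a direct substitution, so no genuinely difficult step arises.
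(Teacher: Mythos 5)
Your proposal is correct and follows essentially the same route as the paper: identify $L(P_n)=P_{n-1}$, $L(C_n)=C_n$, $L(K_{1,n})=K_n$, then apply Theorem~\ref{thm:det'LG} together with Proposition~\ref{prop:detfamilies}. The extra care you take in ruling out the exceptional graphs $G_1,G_2,G_3$ and checking the boundary values of $n$ is detail the paper leaves implicit, but it is the same argument.
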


\begin{proof}
    Note that $P_{n-1} = L(P_n)$, $C_n = L(C_n)$, and $K_{n} = L(K_{1,n})$.
    Thus, the result follows from Proposition~\ref{prop:detfamilies} and Theorem~\ref{thm:det'LG}.
\end{proof}

We can  easily establish a similar relationship between the distinguishing index and the determining index to that between the distinguishing number and determining number in Proposition~\ref{prop:distdet}.

\begin{theorem}
  Let $G$ be a connected graph such that $|V(G)| \ge 3$. Then $\dist'(G)\leq \det'(G) + 1$.
\end{theorem}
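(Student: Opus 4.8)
The plan is to adapt the coloring construction behind Proposition~\ref{prop:distdet} to the edge setting. First I would check that $\det'(G)$ is well defined: since $G$ is connected with $|V(G)| \ge 3$, it has no isolated vertices and is not $K_2$, so it satisfies the hypotheses in the definition of edge determining set. Let $T$ be a minimum edge determining set, so $|T| = \det'(G)$. The construction is to color the edges of $G$ using $\det'(G) + 1$ colors: give each edge of $T$ its own distinct color, and color every remaining edge with one additional common color.

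Next I would verify that this edge coloring is distinguishing. Suppose $\phi \in \aut(G)$ preserves all edge colors. Each edge $e = \{u,v\} \in T$ carries a color assigned to no other edge, so $\phi(e)$ must receive that same color, which forces $\phi(e) = e$, i.e.\ $\{\phi(u),\phi(v)\} = \{u,v\}$. Thus $\phi$ fixes every edge of $T$ in the sense required by the definition of an edge determining set. Since $T$ is an edge determining set, $\phi$ must be the identity automorphism. Hence the coloring distinguishes $G$ using $\det'(G)+1$ colors, and $\dist'(G) \le \det'(G) + 1$.

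Finally I would dispose of the degenerate case $\det'(G) = 0$, in which $G$ is asymmetric and $T = \emptyset$. Here the construction colors all edges with a single color; this is trivially a distinguishing coloring because the only automorphism is the identity, so $\dist'(G) \le 1 = \det'(G)+1$.

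The argument is essentially routine, and I do not expect a serious obstacle. The one point meriting care is the second step: I must confirm that preservation of edge colors yields exactly the setwise-fixing condition $\{\phi(u),\phi(v)\} = \{u,v\}$ appearing in the definition of an edge determining set, and not the stronger requirement that $\phi$ fix $u$ and $v$ individually. Since the definition of edge determining set asks only for this weaker setwise condition, color preservation supplies precisely what is needed, and no gap arises.
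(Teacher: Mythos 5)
Your proof is correct, but it takes a genuinely different route from the paper. The paper derives the inequality by passing to the line graph: it combines $\dist'(G) = \dist(L(G))$ (Proposition~\ref{prop:distdist'}), $\dist(L(G)) \le \det(L(G)) + 1$ (Proposition~\ref{prop:distdet}), and $\det(L(G)) = \det'(G)$ (Theorem~\ref{thm:det'LG}), which forces it to check the three exceptional graphs $G_1, G_2, G_3$ separately by inspection, since those propositions exclude them. You instead give a direct coloring argument that is the exact edge analogue of the Albertson--Boutin proof sketched after Proposition~\ref{prop:distdet}: assign each edge of a minimum edge determining set its own color and all remaining edges one further color; a color-preserving automorphism must then fix each uniquely colored edge setwise, which is precisely the condition in the definition of an edge determining set, forcing it to be the identity. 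Your version is self-contained, needs no case analysis for $G_1, G_2, G_3$, and correctly isolates the one subtle point (that color preservation yields only setwise fixing, which is all the definition requires); the paper's version is shorter on the page because it reuses machinery already built, and it reinforces the line-graph correspondence that organizes the rest of the section. Both arguments are valid under the stated hypotheses.
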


\begin{proof}
We first cover the special cases by inspection.
If $G = G_1$, we have $\dist'(G_1)=1 \leq 2 =\det'(G_1)+1$. If $G = G_2$, we have $\dist'(G_2)=1 \leq 2 = \det'(G_2) + 1$.
If $G = G_3$, we have $\dist'(G_3)=3 = \det'(G_3)+1$.
If $G\neq G_1$, $G_2$, or $G_3$, then
by Proposition~\ref{prop:distdet} and Proposition~\ref{prop:distdist'}, 
\[\dist'(G) = \dist(L(G)) \leq \det(L(G)) + 1 = \det'(G)+1.\]
\end{proof}

\section{Comparing Determining Number and  Determining Index}\label{sec:comparing}

In this section, we discuss general relationships between the determining index and determining number. 
One important difference between these parameters is that while $\det(G)=\det(\overline{G})$, where $\overline G$ is the complement of $G$, the same is not always true for the determining index. This can be because the determining index is undefined for the complement.

\begin{example}
  By Theorem~\ref{thm:PathsCyclesStars}, $\det'(C_3) = 2$. However, $\det'(\overline{C_3})$ is undefined as the number of isolated vertices is $3$.
\end{example}

However, this is not always a result of $\det'(G)$ being undefined.

\begin{example}
  By Theorem~\ref{thm:PathsCyclesStars}, $\det'(K_{1,5}) = 4$. However,  $\overline{K_{1,5}} = K_5 \cup K_1$,  so $\det'(\overline{K_{1,5}})= 3$. With only one isolated vertex, the edge determining index of $\overline{K_{1,5}} = K_5 \cup K_1$ is still defined.
\end{example}

By Theorem~\ref{thm:det'LG}, knowing when $\det'(G)=\det'(\overline{G})$ can be viewed as a problem of knowing when $\det(L(G))=
\det(L(\overline{G}))$. 
Alternatively, we can ask when $\det(G) = \det'(G)$ and $\det(\overline{G})=\det'(\overline{G})$ as $\det(G)= \det(\overline{G})$. If $G$ is self-complementary, then the equality clearly holds. 

\medskip

In  \cite{KALINOWSKI2015124}, Kalinowski and Pilśniak show that $\dist'(G) \le \dist(G) + 1$ for any graph $G$. It is natural to expect that there is a similar relationship between the determining index and the determining number of a graph. Before establishing a connection, 
we present some examples displaying the subtleties of constructing an edge determining set from a given vertex determining set. For a given vertex determining set $S$, it seems reasonable to believe that we can construct a corresponding edge determining set by picking one edge incident to each vertex in $S$. In Figure~\ref{fig:P_4a}, the red vertex constitutes a determining set, but the incident blue edge fails to constitute an edge determining set. However, Figure~\ref{fig:P_4b} shows that the other incident edge does constitute an edge determining set. In this example, $\det(P_4) = \det'(P_4) = 1$.


\begin{figure}[h]
\begin{subfigure}{0.5\textwidth}
\centering
\begin{tikzpicture}[main/.style = {draw, circle}] 
  \node[main] (a1) at (0,0) {1};  
  \node[main] (a2) at (1.5,0)  {2}; 
  \node[main, fill=red] (a3) at (3,0)  {3};  
   \node[main] (a4) at (4.5,0)  {4}; 
  
  \draw (a1) -- (a2); 
  \draw[color=blue, line width=3pt] (a2) -- (a3);  
  \draw (a3) -- (a4); 
\end{tikzpicture}
\caption{$P_4$}
\label{fig:P_4a}
\end{subfigure}
\begin{subfigure}{0.5\textwidth}
\centering
\begin{tikzpicture}[main/.style = {draw, circle}] 
  \node[main] (a1) at (0,0) {1};  
  \node[main] (a2) at (1.5,0)  {2}; 
  \node[main, fill=red] (a3) at (3,0)  {3};  
   \node[main] (a4) at (4.5,0)  {4}; 
  
  \draw (a1) -- (a2); 
  \draw (a2) -- (a3);  
  \draw[color=blue, line width=3pt] (a3) -- (a4); 
\end{tikzpicture}
\caption{$P_4$}
\label{fig:P_4b}
\end{subfigure}
\caption{Comparing $\det(G)$ and $\det'(G)$ on $P_4$.}
\end{figure}

The example of $P_4$ fails to highlight all possible difficulties in selecting edges incident to vertices. While the edge we chose mattered, there was an obvious one. The endvertices of $\{3,4\}$ have different degrees, so fixing that edge will fix its endvertices by Observation~\ref{obs:difdeg}.  The choice is less obvious in a graph with more symmetry, such as the `envelope' graph $H$ in Figure~\ref{fig:env}, which is both vertex-transitive and edge-flip-invariant.

For this example, it is easily verified that $\{3, 5\}$ is a minimum vertex determining set, so $\det(H) =  2$. In Figure~\ref{fig:enva}, the blue edges fail to constitute an edge determining set because the reflection across a central vertical line in the drawing is an automorphism that flips edges $\{3,4\}$ and $\{5,6\}$ (and $\{1,2\}$, in fact). 
However, Figure~\ref{fig:envb} indicates that a different choice of one incident edge per vertex can still produce an edge determining set. However, trying to use only the one edge between vertices $3$ and $5$ won't work because there is an automorphism that flips this edge (as well as the edge $\{4, 6\}$). The same is true of any singleton edge set, because $H$ is edge-flip-invariant. Thus $\det'(H) = 2$. With these nuances in mind, we prove the following. Recall that the distance $d(u,v)$  between vertices $u$ and $v$ is the minimum number of edges in a $u-v$ path; a path of minimum length is called a $u-v$ geodesic. As is shown in \cite{chartrand2012first},  any subpath of a geodesic is also a geodesic.


\begin{figure}[h]
\begin{subfigure}{0.5\textwidth}
\centering
\begin{tikzpicture}[main/.style = {draw, circle}] 
  \node[main] (a1) at (0,0) {1};  
  \node[main] (a2) at (4,0)  {2}; 
  \node[main, fill=red] (a3) at (4,4)  {3};  
  \node[main] (a4) at (0,4) {4};
  \node[main, fill=red] (a5) at (3,2)  {5};  
  \node[main] (a6) at (1,2) {6}; 
  
  \draw (a1) -- (a2); 
  \draw (a2) -- (a3);  
  \draw[color=blue, line width=3pt] (a3) -- (a4); 
  \draw (a1) -- (a4);

  \draw[color=blue, line width=3pt] (a5) -- (a6); 
 
  \draw (a3) -- (a5); 
  \draw (a4) -- (a6);
  \draw (a1) -- (a6); 
  \draw (a2) -- (a5);
\end{tikzpicture}
\caption{$H$}
\label{fig:enva}
\end{subfigure}
\begin{subfigure}{0.5\textwidth}
\centering
\begin{tikzpicture}[main/.style = {draw, circle}] 
  \node[main] (a1) at (0,0) {1};  
  \node[main] (a2) at (4,0)  {2}; 
  \node[main, fill=red] (a3) at (4,4)  {3};  
  \node[main] (a4) at (0,4) {4};
  \node[main, fill=red] (a5) at (3,2)  {5};  
  \node[main] (a6) at (1,2) {6}; 
  
  \draw (a1) -- (a2); 
  \draw (a2) -- (a3);  
  \draw[color=blue, line width=3pt] (a3) -- (a4); 
  \draw (a1) -- (a4);

  \draw (a5) -- (a6); 
 
  \draw[color=blue, line width=3pt] (a3) -- (a5); 
  \draw (a4) -- (a6);
  \draw (a1) -- (a6); 
  \draw (a2) -- (a5);
\end{tikzpicture}
\caption{$H$}
\label{fig:envb}
\end{subfigure}
\caption{Comparing $\det(H)$ and $\det'(H)$}
\label{fig:env}
\end{figure}\label{fig:envelope}

\begin{theorem}\label{thm:lowerbound}
  Let $G$ be a connected graph such that $|V(G)| \ge 3$. If $\det(G) \neq 1$, then $\det'(G)\leq \det(G)$.
\end{theorem}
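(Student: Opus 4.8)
The plan is to start from a minimum vertex determining set $S = \{v_1, \ldots, v_k\}$ with $k = \det(G)$ and build an edge determining set of size at most $k$. The case $\det(G)=0$ is immediate, since then $G$ is asymmetric and $\det'(G)=0$, so I assume $k \ge 2$. The engine of the argument is a distance lemma: if an automorphism $\phi$ fixes a vertex $a$ and fixes the final edge $\{w,b\}$ of some $a$--$b$ geodesic, then $\phi$ fixes $b$. Indeed, fixing that edge forces $\phi(b)\in\{b,w\}$, but a flip would give $\phi(b)=w$ with $d(a,w)=d(a,b)-1$, contradicting $d(a,\phi(b))=d(a,b)$ since $\phi$ fixes $a$; here I use that a prefix of a geodesic is again a geodesic.

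Given this lemma, I would designate one vertex of $S$ as an anchor, say $v_1$, and for each $j\ge 2$ include in $T$ the last edge $f_j$ of a geodesic from $v_1$ to $v_j$. Once $v_1$ is known to be fixed, the lemma fixes every $v_j$ in turn, hence all of $S$, and since $S$ is determining this forces $\phi=\mathrm{id}$. These are $k-1$ edges, leaving a budget of one further edge to pin the anchor $v_1$.

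The heart of the proof, and the main obstacle, is anchoring $v_1$ with this single remaining edge: in a symmetric graph (for instance a regular one) Observation~\ref{obs:difdeg} is unavailable and a lone edge at $v_1$ may be flipped. I would split on the geometry of $S$. If some two determining vertices lie at distance $\ge 2$, I take them to be $v_1,v_2$ and add the first edge $\{v_1,y\}$ of the chosen $v_1$--$v_2$ geodesic; a putative flip $\phi(v_1)=y$ then collides with the constraint $\phi(v_2)\in\{v_2,w_2\}$ coming from $f_2$, again via a distance computation along the geodesic (and when this geodesic has length $2$, the edges $\{v_1,y\}$ and $f_2$ are adjacent, so Observation~\ref{obs:adjedges} fixes $v_1$ directly). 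If instead $S$ is a clique, then every $f_j=\{v_1,v_j\}$ shares $v_1$: for $k\ge 3$ any two of them are adjacent and Observation~\ref{obs:adjedges} already fixes $v_1$ for free, while for $k=2$ I add a second edge at an endpoint of $\{v_1,v_2\}$ (such a neighbor exists because connectivity and $|V(G)|\ge 3$ rule out a $K_2$ component), creating an adjacent pair and again invoking Observation~\ref{obs:adjedges}.

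Finally I would verify the bookkeeping: in every branch the total number of edges is at most $(k-1)+1=k$, with genuine savings (down to $k-1$) in the clique case with $k\ge 3$, while any coincidences among the $f_j$ only lower the count. The delicate points to get right are exactly the small-distance degeneracies in the anchoring step, namely geodesics of length $1$ or $2$ and anchors of small degree, which is where I expect the real work to lie, in contrast to the clean geodesic lemma that drives the generic case.
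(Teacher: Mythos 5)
Your proposal is correct and follows essentially the same route as the paper: the same case split between $S$ being a clique (handled via Observation~\ref{obs:adjedges}, with one extra edge when $k=2$) and two determining vertices lying at distance at least $2$, and the same key mechanism of anchoring one vertex with the first and last edges of a geodesic and then fixing each remaining $s_j$ by taking the last edge of an $s_1$--$s_j$ geodesic and ruling out a flip by distance preservation. The only cosmetic difference is that in the clique case you use a star at the anchor where the paper uses a path through $S$; both give $k-1$ edges there.
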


\begin{proof}

Let $S = \{s_1, ..., s_k\}$ be a minimum vertex determining set of $G$ such that $k > 1$. 

First suppose all the vertices of  $S$ are pairwise adjacent. 
Then there exists a path $P$ such that $V(P) = S$. Let $T$ be the edges of $P$, so that $|T| = k-1$. If $k = 2$, there exists an edge $e$ adjacent to $\{s_1, s_2\}$, since $G$ is connected with $|V(G)|\ge 3$.
In this case, we add $e$ to $T$ to create an edge set $T'$ of size $k=2$.
Assume $\phi \in \aut(G)$ fixes the edges of $T$.  Let $\{u,v\}, \{v,w\} \in T$ be adjacent edges. 
By Observation~\ref{obs:adjedges}, if two adjacent edges are fixed, then the endvertices of those edges are fixed. Thus $\phi(u)=u$, $\phi(v)=v$, $\phi(w)=w$. Working our way along the adjacent edges of path $P$, we can conclude that all the endvertices of the edges of $T$ are fixed. Therefore, $\phi(s)=s$ for all $s \in S$ and so $\det'(G) \le \det(G) = k$. \medskip

Now suppose not all of the vertices are pairwise adjacent. Then by renumbering the vertices in $S$ if necessary, we can assume $d(s_1, s_2) \ge 2$.  Let $P = \{s_1, u_1,..., u_2, s_2\}$ be a $s_1 - s_2$ geodesic. Note that it is possible for $u_1=u_2$. We will inductively construct an edge determining set  consisting of edges incident to vertices of $S$. We start with the two edges $e_1 =\{s_1, u_1\}$ and $e_2 =\{s_2, u_2\}$. Let $T_2 = \{e_1, e_2\}$. Assume $\phi \in \aut(G)$ fixes these two edges. If $u_1=u_2$, then the edges are adjacent, so by Observation~\ref{obs:adjedges}, $\phi(s_1) = s_1$ and $\phi(s_2) = s_2$. Otherwise, the edges are nonadjacent. In this case, suppose that $\phi$   flips $e_1$ but fixes the vertices of $e_2$. Then $\phi(s_1)=u_1$, $\phi(u_1)=s_1$, and $\phi(s_2)=s_2$. Since automorphisms are distance-preserving, $d(s_1, s_2)=d(\phi(s_1),\phi(s_2))=d(u_1,s_2)$. This contradicts our assumption that that $P$ is a geodesic. Now suppose $\phi$ flips both $e_1$ and $e_2$, so that $\phi(s_1)=u_1$, $\phi(u_1)=s_1$, and $\phi(s_2)=u_2$. Again, since automorphisms are distance-preserving, $d(s_1, s_2)=d(\phi(s_1),\phi(s_2))=d(u_1,u_2)$. This also contradicts our assumption that $P$ is a geodesic. Thus, if $\phi$ fixes the edges of $T_2$, then $\phi(s_1)=s_1$, and $\phi(s_2)=s_2$.   \medskip

Next let $2<i\leq k$ and let $T_{i-1}= \{e_1, e_2, \dots, e_{i-1}\}$ be a set of edges of the form $e_j = \{ s_j, u_j\}$ with $d(s_1, u_j) < d(s_1, s_j)$. Assume that any automorphism fixing every edge of $T_{i-1}$ also fixes the vertices $s_1, \dots, s_{i-1}$.
Since $G$ is connected, there exists a path from $s_1$ to $s_i$. Let $e_i = \{s_i,u_i\}$ such that  $u_i$ is adjacent to $s_i$ on an $s_1 - s_i$ geodesic. Thus $d(s_1, u_i) < d(s_1, s_i)$. Note that it is possible that $u_i = s_j$ for some $j \in \{1, \dots, i-1\}$. However, in that case it is not possible that $e_i = \{s_i, u_i\} = \{s_i, s_j\}$ is already in $T_{i-1}$, because this would imply $s_i = u_j$, which in turn would imply
\[
d(s_1, s_j) = d(s_1, u_i) < d(s_1, s_i) = d(s_1, u_j) < d(s_1, s_j). 
\]
Let $T_i = T_{i-1} \cup \{e_i\}$.

Let $\phi \in \aut(G)$ be an automorphism of $G$ that fixes the edges of $T_i$. Then $\phi$ must fix the edges of $T_{i-1}$ and so by assumption, $\phi$ fixes $s_1, \dots , s_i$.
If $u_i = s_j$ for some $j \in \{1, \dots, i-1\}$, then since $\phi(u_i) =\phi(s_j) = s_j = u_i$, it must be the case that $\phi(s_i) = s_i$ also.
Otherwise, assume that $\phi$ flips edge $e_i$; that is, 
assume that $\phi(s_i)=u_i$ and $\phi(u_i)=s_i$. Then since automorphisms are distance-preserving, $d(s_1,s_i)=d(\phi(s_1),\phi(s_i)) = d(s_1, u_i)< d(s_1, s_i)$, a contradiction. Thus, $\phi(s_i)=s_i$.

Finally, let $T =T_k$. If $\phi \in \aut(G)$ fixes every edge in $T$, then $\phi(s_i)=s_i$ for all $s_i \in S$. Since $S$ is a determining set,
$\phi$ is the identity automorphism.
Therefore, by definition $T$ is an edge determining set. For each vertex in $S$, we added a distinct edge when constructing $T$. Therefore, $|T| = |S|$. Thus, $\det'(G)\leq \det(G)$. 
\end{proof}

\begin{theorem}\label{thm:upperbound}
  Let $G$ be a connected graph such that $|V(G)| \ge 3$. Then $\det(G)\leq 2\det'(G)$.
\end{theorem}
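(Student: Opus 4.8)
The plan is to build a vertex determining set directly out of a minimum edge determining set by simply taking the endvertices of each chosen edge. Concretely, I would start by letting $T = \{e_1, \dots, e_m\}$ be a minimum edge determining set of $G$, so that $m = \det'(G)$; since $G$ is connected with $|V(G)| \ge 3$, it has no isolated vertices and no $K_2$ component, so $\det'(G)$ is well-defined. Writing each $e_j = \{u_j, v_j\}$, I would define the vertex set $S = \bigcup_{j=1}^{m} \{u_j, v_j\}$ consisting of all endvertices of the edges of $T$. Since each edge contributes at most two vertices, we immediately have $|S| \le 2m = 2\det'(G)$.

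The heart of the argument is to show that $S$ is a vertex determining set. Suppose $\phi \in \aut(G)$ fixes every vertex of $S$, i.e. $\phi(w) = w$ for all $w \in S$. Then for each edge $e_j = \{u_j, v_j\} \in T$ we have $\phi(u_j) = u_j$ and $\phi(v_j) = v_j$, so in particular $\{\phi(u_j), \phi(v_j)\} = \{u_j, v_j\}$; that is, $\phi$ fixes the edge $e_j$. Thus $\phi$ fixes every edge of $T$. Because $T$ is an edge determining set, the only automorphism fixing all of its edges is the identity, so $\phi$ is the identity automorphism. By definition this makes $S$ a vertex determining set, whence $\det(G) \le |S| \le 2\det'(G)$, which is the claimed bound.

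The point worth emphasizing (rather than any genuine obstacle, of which there is essentially none) is the asymmetry between the two directions of passing between fixing vertices and fixing edges. Fixing the two endvertices of an edge trivially fixes that edge, which is why the endvertex construction works with no case analysis; the subtlety lived entirely in the reverse inequality of Theorem~\ref{thm:lowerbound}, where fixing a single edge need not fix its endvertices (it may flip them), forcing the geodesic argument there. Here the factor of $2$ arises precisely from having to commit to both endvertices of each edge, and any adjacency among the edges of $T$ (shared endvertices) only shrinks $|S|$, so it can never violate the bound. I would close by noting that sharpness of this upper bound is addressed separately, as advertised in the abstract.
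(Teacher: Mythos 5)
Your proof is correct and follows the same route as the paper: take the set $S$ of endvertices of a minimum edge determining set $T$, note $|S|\le 2|T|$, and observe that any automorphism fixing the vertices of $S$ fixes every edge of $T$ and hence is the identity. Nothing is missing.
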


\begin{proof}

Let $T$ be a minimum edge determining set of $G$ and let $S$ be the set of endvertices of the edges in $T$. If the edges of $T$ are all nonadjacent, then $|S|= 2|T|$; allowing for the possibility that some vertices are endvertices of two or more edges of $T$ means $|S|
\le 2|T| = 2 \det'(G)$.
Let $\phi \in \aut(G)$ such that $\phi$ fixes the vertices of $S$. Let $\{u,w\} \in T$. Then $\{\phi(u),\phi(w)\} = \{u,w\}$, since $u, w \in S$. Since $T$ is an edge determining set, $\phi$ is the identity automorphism. Therefore, by definition, $S$ is a vertex determining set.
Thus, $\det(G) \le 2\det'(G)$.
\end{proof}

If $\det(G) \neq 1$, then together Theorem~\ref{thm:lowerbound} and Theorem~\ref{thm:upperbound} give us the inequality $\det'(G)\leq \det(G) \leq 2\det'(G)$. We can algebraically rewrite this inequality to give bounds on $\det'(G)$ in terms of $\det(G)$.

\begin{corollary}\label{cor:upperlower}
Let $G$ be a connected graph such that $|V(G)| \ge 3$. If $\det(G)\neq 1$, then $\frac{1}{2}\det(G)\leq \det'(G) \leq \det(G)$.
\end{corollary}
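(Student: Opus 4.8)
The plan is to obtain the corollary as a direct algebraic consequence of the two theorems just proved, since all the genuine graph-theoretic content has already been established there; what remains is only to combine the two inequalities and rearrange. First I would invoke Theorem~\ref{thm:lowerbound}: because the corollary assumes $\det(G) \neq 1$ while also taking $G$ connected with $|V(G)| \ge 3$, the hypotheses of that theorem are met exactly, and it yields $\det'(G) \le \det(G)$. This is already the right-hand inequality in the claimed chain, so no further work is needed for the upper bound on $\det'(G)$.

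For the lower bound I would apply Theorem~\ref{thm:upperbound}, which holds for every connected graph on at least three vertices with no additional assumption, to obtain $\det(G) \le 2\det'(G)$. Dividing both sides by $2$ gives $\frac{1}{2}\det(G) \le \det'(G)$. Stringing the two facts together produces $\frac{1}{2}\det(G) \le \det'(G) \le \det(G)$, which is precisely the asserted bound on $\det'(G)$ in terms of $\det(G)$, completing the argument.

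The only point that requires any care, and the reason the hypothesis $\det(G) \neq 1$ appears in the statement, is that the upper estimate $\det'(G) \le \det(G)$ is exactly the conclusion of Theorem~\ref{thm:lowerbound} and therefore inherits its restriction; the bound can genuinely fail when $\det(G) = 1$, as the edge-flip-invariant behavior discussed before Theorem~\ref{thm:lowerbound} shows that a single fixed vertex need not be forced by a single fixed edge. By contrast, the lower estimate coming from Theorem~\ref{thm:upperbound} needs no such restriction. I would accordingly note explicitly that the $\det(G) \neq 1$ assumption is used only to license the invocation of Theorem~\ref{thm:lowerbound}. There is no substantive obstacle here: the result is simply a repackaging of the two theorems, so I do not expect any step to present difficulty beyond this matching of hypotheses.
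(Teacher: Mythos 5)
Your proposal is correct and matches the paper exactly: the corollary is obtained by combining Theorem~\ref{thm:lowerbound} (which supplies $\det'(G)\le\det(G)$ under the hypothesis $\det(G)\neq 1$) with Theorem~\ref{thm:upperbound} (which supplies $\det(G)\le 2\det'(G)$ unconditionally) and dividing the latter by $2$. Your remark that the $\det(G)\neq 1$ assumption is needed only to invoke Theorem~\ref{thm:lowerbound} is also consistent with the paper's discussion surrounding Corollary~\ref{cor:det1}.
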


We have so far avoided the case when $\det(G) = 1$. In order to make claims about $\det'(G)$ in this case, we need to establish results about edge-flip-invariant graphs. Recall that $G$ is edge-flip-invariant if for all $\{u, v\} \in E(G)$, there exists $\phi \in \aut(G)$ such that $\phi(u)=v$ and $\phi(v) = u$. We now introduce the following definition. 

\begin{definition}
  A vertex $v \in V(G)$ has {\it neighbor-swapping property} if for all $u \in N(v)$, there exists $\phi \in \aut(G)$ such that $\phi(v) = u$ and $\phi(u)=v$.
\end{definition}

Clearly, all the vertices of $K_n$ have this property. Many other symmetric graphs have vertices with this property such as $H$ in Figure~\ref{fig:env}, the hypercubes $Q_n$ and cycles $C_n$.

\begin{theorem}\label{thm:neigh}
For a connected graph $G$, the following are equivalent:
\begin{enumerate}[(a)]
    \item there exists $v \in V(G)$ that has neighbor-swapping property;
    \item every $v \in V(G)$ has neighbor-swapping property;
    \item $G$ is edge-flip-invariant.
\end{enumerate}
In this case, $G$ is vertex-transitive.
\end{theorem}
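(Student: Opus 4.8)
The plan is to establish the cycle of implications (a) $\Rightarrow$ (b) $\Rightarrow$ (c) $\Rightarrow$ (a), and then to derive vertex-transitivity from condition (b). Two of these implications amount to unwinding the definitions. For (b) $\Rightarrow$ (c), given any edge $\{u,v\}$ I would apply the neighbor-swapping property of $v$ to its neighbor $u$ to obtain an automorphism flipping that edge, which is precisely edge-flip-invariance. For (c) $\Rightarrow$ (a), I would pick any vertex $v$ (which exists since $G$ is connected and hence nonempty); for each $u \in N(v)$ the pair $\{u,v\}$ is an edge, so edge-flip-invariance supplies the required swap, showing that $v$ has the neighbor-swapping property.

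The substantive implication is (a) $\Rightarrow$ (b): from a single vertex with the property I must reach every vertex. The key step I would prove first is that the neighbor-swapping property is invariant under $\aut(G)$, i.e.\ if $v$ has it and $\psi \in \aut(G)$, then $\psi(v)$ has it. This follows by conjugation: given $w \in N(\psi(v))$, the vertex $\psi^{-1}(w)$ lies in $N(v)$, so there is $\phi \in \aut(G)$ swapping $v$ and $\psi^{-1}(w)$, and then $\psi \phi \psi^{-1}$ swaps $\psi(v)$ and $w$. Letting $A$ denote the set of vertices with the neighbor-swapping property, I observe that if $v \in A$ and $u \in N(v)$, the swap automorphism sends $v$ to $u$, so by invariance $u \in A$; hence $v \in A$ forces $N(v) \subseteq A$.

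With this propagation step in hand, I would finish by induction on the distance from the given vertex $v_0 \in A$. Every vertex at distance $n+1$ has a neighbor at distance $n$, which lies in $A$ by the inductive hypothesis, so the vertex itself lies in $A$; connectivity then gives $A = V(G)$, which is condition (b). Finally, for vertex-transitivity, given $x, y \in V(G)$ I would take a path $x = w_0, w_1, \dots, w_m = y$; edge-flip-invariance yields automorphisms $\phi_i$ with $\phi_i(w_i) = w_{i+1}$, and the composition $\phi_{m-1} \circ \cdots \circ \phi_0$ maps $x$ to $y$.

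The main obstacle is the (a) $\Rightarrow$ (b) direction, and within it the conjugation/invariance claim; the remaining implications and the vertex-transitivity argument are routine bookkeeping on the definitions and on connectivity. I expect no difficulty beyond carefully verifying that the conjugate automorphism $\psi \phi \psi^{-1}$ acts as claimed on the two relevant vertices.
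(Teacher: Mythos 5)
Your proposal is correct and follows essentially the same route as the paper: the heart of both arguments is the conjugation step for (a) $\Rightarrow$ (b) (your $\psi\phi\psi^{-1}$ is exactly the paper's $\sigma=\alpha^{-1}\beta\alpha$), followed by propagation along paths using connectivity, with (b) $\Rightarrow$ (c) and (c) $\Rightarrow$ (a) being definition-unwinding and vertex-transitivity obtained by composing edge-flips along a path. The only difference is cosmetic: you package the conjugation as a general statement that the neighbor-swapping property is $\aut(G)$-invariant, whereas the paper applies it directly to a neighbor's neighbor.
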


\begin{proof}
Assume there exists $v \in V(G)$ such that $v$ has neighbor-swapping property.
Let $x \in N(v)$ and $y \in N(x)$ such that $y \neq v$, as shown in Figure~\ref{fig:neighborneighbor}. By assumption, there exists $\alpha \in \aut(G)$ such that $\alpha(x)=v$ and $\alpha(v)=x$. Then $\alpha(y) \in N(\alpha(x))=N(v)$. Hence, there exists $z \in N(v)$ such that $\alpha(y)=z$. Since $v$ has neighbor-swapping property, there exists $\beta \in \aut(G)$ such that $\beta(z)=v$ and $\beta(v)=z$. Define $\sigma: V(G) \rightarrow V(G)$ by $\sigma = \alpha^{-1} \beta \alpha$. Then $\sigma(x) = y$ and $\sigma(y) = x$. Hence, for all $y \in N(x)$, there is an automorphism switching $x$ and $y$.
Hence, $x$ has neighbor-swapping property.


\begin{figure}[h]

\centering
\begin{tikzpicture}[main/.style = {draw, circle}] 
  \node[main] (a1) at (1,1) {$z$};  
  \node[main] (a2) at (3,1)  {$x$}; 
  \node[main] (a3) at (3.5,0)  {$y$};  
  \node[main] (a4) at (2,2) {$v$};
  \node[draw=none] (a5) at (2,1) {};
  \node[draw=none] (a6) at (1,0) {};
  \node[draw=none] (a7) at (.5,0) {};
  \node[draw=none] (a8) at (1.5,0) {};
  \node[draw=none] (a9) at (3,0) {};
  \node[draw=none] (a10) at (2.5,0) {};

  \draw (a1) -- (a4); 
  \draw (a2) -- (a3);  
  \draw (a2) -- (a4);
  \draw[dotted] (a4) -- (a5);
  \draw[dotted] (a1) -- (a6);
  \draw[dotted] (a1) -- (a7);
  \draw[dotted] (a1) -- (a8);
  \draw[dotted] (a2) -- (a9);
  \draw[dotted] (a2) -- (a10);

\end{tikzpicture}
\caption{$x\in N(v)$ and $y \in N(x)$, $y \neq v$}
\label{fig:neighborneighbor}

\end{figure}

Now let $w \in V(G)$. Since $G$ is connected, there exists a $v - w $ path  $P = (v, u_1, ..., u_k, w)$. By the argument above,  $u_1$ has neighbor-swapping property. Similarly, since $u_2 \in N(u_1)$, $u_2$ has neighbor-swapping property. Continuing along the edges of the path, $w$ must have neighbor-swapping property. Thus, all the vertices of $G$ are interchangeable with their neighbors. By definition, $G$ is edge-flip-invariant.

Now assume $G$ is edge-flip-invariant. Let $v \in V(G)$ and $u \in N(v)$. By definition, there exists $\phi \in \aut(G)$ such that $\phi(u) = v$ and $\phi(v) = u$. Thus, there exists $v \in V(G)$ such that $v$ has the neighbor-swapping property.

Next we show that if $G$ is connected and edge-flip-invariant, then $G$ is vertex-transitive. Let $v, w \in V(G)$. Since $G$ is connected, there exists a $v - w$ path, $P = (v= u_1, u_2, ..., u_k= w)$. Since $G$ is edge-flip-invariant, there exists $\phi_i \in \aut(G)$ such that $\phi_i(u_i)=u_{i+1}$ and $\phi_i(u_{i+1}) = u_i$ for all $1 \le i \le k$. Define $\phi: V(G) \rightarrow V(G)$ by $\phi = \phi_{k-1} \circ ... \circ \phi_1$. Then $\phi(u_1) = u_k$. Thus, $G$ is vertex-transitive.
\end{proof}

Notably, not all vertex-transitive graphs are edge-flip-invariant, such as the Holt graph.

\begin{example} The Holt graph is the smallest graph that is vertex-transitive, edge-transitive, but not arc-transitive \cite{HoltGraph}.  Since it is edge-transitive, if there is an automorphism that flips any one edge, then it can be composed with other automorphisms to create an automorphism taking any arc to any other arc. This contradicts the fact that it is not arc-transitive. \end{example}

Now we can establish the determining index when $\det(G) = 1$. In this case, it is possible that $\det'(G) > \det(G)$.

\begin{corollary}\label{cor:det1}
Let $G$ be a connected graph such that $|V(G)| \ge 3$ and $\det(G) = 1$. 
Then
\[
\det'(G) = \begin{cases} 2, \quad & \text{ if $G$ is edge-flip-invariant},\\
1, &\text{otherwise}.
\end{cases}
\]
\end{corollary}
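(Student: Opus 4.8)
The plan is to fix a vertex $s$ for which $\{s\}$ is a (minimum) vertex determining set, so that by definition the only $\phi \in \aut(G)$ with $\phi(s)=s$ is the identity. Since $\det(G)=1\neq 0$, the graph is not asymmetric, so the empty edge set is not an edge determining set and $\det'(G)\ge 1$. I would then split along the two cases of the statement, using Theorem~\ref{thm:neigh} to translate edge-flip-invariance into the neighbor-swapping property: $G$ is edge-flip-invariant if and only if every vertex (equivalently, some vertex) has the neighbor-swapping property, so $G$ fails to be edge-flip-invariant exactly when no vertex has it.

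For the case where $G$ is not edge-flip-invariant, I would invoke the contrapositive form of Theorem~\ref{thm:neigh}: since no vertex has the neighbor-swapping property, in particular $s$ does not, so there is a neighbor $w \in N(s)$ for which no automorphism swaps $s$ and $w$ (such $w$ exists because $s$ has a neighbor, $G$ being connected with $|V(G)|\ge 3$). I claim $\{\{s,w\}\}$ is an edge determining set. Indeed, if $\phi \in \aut(G)$ fixes the edge $\{s,w\}$, then either $\phi(s)=s$ and $\phi(w)=w$, or $\phi$ swaps $s$ and $w$; the latter is ruled out by the choice of $w$, so $\phi(s)=s$, whence $\phi$ is the identity since $\{s\}$ is a vertex determining set. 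This gives $\det'(G)\le 1$, so $\det'(G)=1$.

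For the edge-flip-invariant case I would establish both inequalities. For $\det'(G)\ge 2$: given any edge $\{u,v\}$, edge-flip-invariance supplies $\phi \in \aut(G)$ with $\phi(u)=v$ and $\phi(v)=u$; this $\phi$ fixes the edge as a set but is nontrivial (it swaps the distinct vertices $u$ and $v$), so no single edge is an edge determining set, and combined with $\det'(G)\ge 1$ this forces $\det'(G)\ge 2$. For $\det'(G)\le 2$: by Theorem~\ref{thm:neigh} the graph is vertex-transitive, hence regular, and its common degree cannot be $1$ (a connected $1$-regular graph is $K_2$, contradicting $|V(G)|\ge 3$), so $\deg(s)\ge 2$ and $s$ has two distinct neighbors $a,b$. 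Then $T=\{\{s,a\},\{s,b\}\}$ consists of two adjacent edges, and by Observation~\ref{obs:adjedges} any $\phi$ fixing both fixes their endvertices, in particular $\phi(s)=s$, again forcing $\phi$ to be the identity. Hence $T$ is an edge determining set and $\det'(G)=2$.

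The case analysis itself is routine; the only genuine subtleties are (i) extracting a \emph{non-swappable} neighbor of $s$ in the non-invariant case, which is precisely the negation of the neighbor-swapping property and is why Theorem~\ref{thm:neigh} is the right tool, and (ii) guaranteeing $\deg(s)\ge 2$ in the invariant case, which requires ruling out the degenerate $1$-regular graph. I expect step (i) to be the main conceptual hurdle, since it is the point at which the hypothesis that $G$ is not edge-flip-invariant must be converted into a usable local statement about a specific edge incident to the determining vertex.
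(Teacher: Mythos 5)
Your proposal is correct and follows essentially the same route as the paper: both cases are handled by translating (non-)edge-flip-invariance through Theorem~\ref{thm:neigh}, using a non-swappable edge at the determining vertex in the first case and two adjacent edges fixed via Observation~\ref{obs:adjedges} in the second. The only cosmetic difference is that you justify the existence of two adjacent edges at $s$ via vertex-transitivity and regularity, while the paper only requires two adjacent edges with $v$ among their endvertices, which follows directly from connectivity and $|V(G)|\ge 3$.
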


\begin{proof}
Let $S = \{v\}$ be a minimum vertex determining set of $G$. 

If $G$ is not edge-flip-invariant, then by Theorem~\ref{thm:neigh}, no vertex, including $v$, has neighbor-swapping property. Thus, there exists $u\in N(v)$ such that no automorphism interchanges $u$ and $v$. Let $T = \{\{u,v\}\}$.  Let $\phi \in \aut(G)$ such that $\{\phi(u), \phi(v)\}=\{u,v\}$. Then $\phi(u) = u$ and $\phi(v)=v$. Since the vertex of $S$ is fixed, $\phi(w) = w$ for all $w \in V(G)$. Hence, $T$ is a minimum edge determining set and $\det'(G) = 1$.

Now assume that $G$ is edge-flip-invariant. By definition, there is a nontrivial automorphism that fixes any given edge. 
Then clearly $\det'(G) > 1$. Since $G$ is connected with $|V(G)| \ge 3$, we can find two adjacent edges $e_1$ and $e_2$ such that $v \in e_1 \cup e_2$.
Let $T= \{e_1, e_2\}$ and assume that $\phi \in \aut(G)$ fixes the edges of $T$. Then by Observation~\ref{obs:adjedges}, the endvertices of the edges are fixed.
Since the vertex of $S$ is fixed, $\phi(w) = w$ for all $w \in V(G)$. Hence, $T$ is a minimum determining set and $\det'(G)=2$.
\end{proof}

\begin{figure}[h]

\centering
\begin{tikzpicture}[main/.style = {draw, circle}, scale=0.75] 

  \node[main] (a1) at (0.0, 4.0) {0};
\node[main] (a2) at (2.57, 3.06) {1};
\node[main] (a3) at (3.93, 0.69) {2};
\node[main] (a4) at (3.46, -1.99) {3};
\node[main] (a5) at (1.36, -3.75) {4};
\node[main] (a6) at (-1.36, -3.75) {5};
\node[main] (a7) at (-3.46, -2.00) {6};
\node[main] (a8) at (-3.93, 0.69) {7};
\node[main] (a9) at (-2.57, 3.06) {8};
\node[main] (a10) at (1.36, 3.75) {9};
\node[main] (a11) at (3.46, 2.00) {10};
\node[main] (a12) at (3.93, -0.69) {11};
\node[main] (a13) at (2.57, -3.06) {12};
\node[main] (a14) at (0, -4.0) {13};
\node[main] (a15) at (-2.57, -3.06) {14};
\node[main] (a16) at (-3.93, -0.69) {15};
\node[main] (a17) at (-3.46, 1.99) {16};
\node[main] (a18) at (-1.36, 3.75) {17};
\node[scale=1.5] (a19) at (-.88, 5.3) {$\beta$};
\node[scale=1.5] (a20) at (-3.8, 3.2) {$\alpha$};
\node[scale=1.5] (a20) at (.88, 5.3) {$\gamma$};

  \draw (a1) -- (a10); 
  \draw (a1) -- (a18);
  \draw (a1) -- (a16);
  \draw (a2) -- (a10);
  \draw (a2) -- (a11); 
  \draw (a2) -- (a17); 
  \draw (a3) -- (a11); 
  \draw (a3) -- (a18);
  \draw (a3) -- (a12);
  \draw (a4) -- (a10);
  \draw (a4) -- (a12); 
  \draw (a4) -- (a13); 
  \draw (a5) -- (a11); 
  \draw (a5) -- (a13);
  \draw (a5) -- (a14);
  \draw (a6) -- (a14);
  \draw (a6) -- (a15); 
  \draw (a6) -- (a12); 
  \draw (a7) -- (a15); 
  \draw (a7) -- (a16);
  \draw (a7) -- (a13);
  \draw (a8) -- (a14);
  \draw (a8) -- (a16); 
  \draw (a8) -- (a17); 
  \draw (a9) -- (a15);
  \draw (a9) -- (a17); 
  \draw (a9) -- (a18); 
  
  \draw[red,thick,dashed] (.85,-5) -- (-.85,5);
  \draw[red,thick,dashed] (3.6,-3) -- (-3.6,3);
  \draw[red,thick,dashed] (-.85,-5) -- (.85,5);
 
\end{tikzpicture}
\caption{$G_4$}
\label{fig:BrooksGraph}

\end{figure}

The following examples to show that there exist graphs for both cases. 

\begin{example} By Proposition~\ref{prop:detfamilies}, $\det(P_4)=1$, and clearly $P_4$ is not edge-flip-invariant. Hence, in accordance with Theorem~\ref{thm:PathsCyclesStars} and as seen earlier, $\det'(P_4)=1$.\end{example}

\begin{example} In \cite{math9020166}, Brooks et al.  show that any automorphism fixing any vertex of the graph $G_4$ shown in Figure~\ref{fig:BrooksGraph} will fix all other vertices of the graph. Therefore, $\det(G_4) = 1$. 
We can use Theorem~\ref{thm:neigh} to show the graph is edge-flip-invariant. Looking at the edges incident to $0$, we list the following automorphisms as permutations of the vertices. These are reflections across the dashed lines in the drawing.   \medskip

$\alpha = (0\ 15)(7\ 17)(16\ 8)(9\ 6)(1\ 14)(10\ 5)(2\ 13)(11\ 4)(3\ 12)$

$\beta = (0\ 17)(9\ 8)(16\ 1)(7\ 10)(15\ 2)(11\ 6)(14\ 3)(12\ 5)(13\ 4)$

$\gamma = (0\ 9)(1\ 17)(8\ 10)(16\ 2)(7\ 11)(15\ 3)(6\ 12)(14\ 4)(5\ 13)$. \medskip

Thus, $0$ has neighbor-swapping property. By Theorem~\ref{thm:neigh}, the graph is edge-flip-invariant. By Corollary~\ref{cor:det1}, $\det'(G_4) = 2$.\medskip
\end{example}

\section{The Determining Index for Some  Families of Graphs}

In this section, we find the determining index for several different families of graphs. This can give an idea of the differences between the determining index and the determining number as well as the sharpness of the bounds in Corollary \ref{cor:upperlower}. Recall that $\det(K_{n,m})=n+m-2$ for $n,m>1$ and $\det(K_n) = n-1$. 

\begin{theorem}
  For $n \ge m > 1$,
  \[
  \det'(K_{n, m}) = \begin{cases} n-1, \quad &\text{ if } n \neq m,\\
  n, & \text{ if } n = m.
  \end{cases}
  \]
  
\end{theorem}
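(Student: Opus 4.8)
The plan is to work directly with the automorphism group of $K_{n,m}$. Write the two parts as $A$ and $B$ with $|A|=n \ge m = |B|$. Since $K_{n,m}$ is connected and bipartite, its bipartition is unique, so every $\phi \in \aut(K_{n,m})$ either preserves the two parts setwise or interchanges them, the latter being possible only when $n=m$. A part-preserving automorphism is a pair $(\sigma,\tau) \in \mathrm{Sym}(A)\times\mathrm{Sym}(B)$ acting independently, and, because it keeps each endpoint inside its own part, it fixes an edge exactly when it fixes both endpoints. I would record this at the outset, together with the fact that any two vertices in the same part are nonadjacent twins, so transposing two of them is a nontrivial automorphism by Observation~\ref{obs:twins}.

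For the case $n\neq m$, the key simplification is that the endpoints of every edge have different degrees ($m$ versus $n$), so by Observation~\ref{obs:difdeg} fixing an edge forces both its endpoints to be fixed. The determining index therefore reduces to covering a vertex determining set by endpoints of edges. For the lower bound I would argue that if $|T|\le n-2$, then at most $n-2$ distinct $A$-vertices occur as endpoints, leaving two free $A$-vertices whose transposition fixes every edge of $T$; hence $|T|\ge n-1$. For the matching upper bound I would exhibit $n-1$ edges whose $A$-endpoints are $a_1,\dots,a_{n-1}$ and whose $B$-endpoints include $m-1$ distinct vertices of $B$ (possible precisely because $m-1\le n-1$); fixing these edges fixes $n-1$ vertices of $A$ and $m-1$ of $B$, hence all of $A$ and all of $B$, so $T$ is determining and $\det'(K_{n,m})=n-1$.

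The case $n=m$ is where the real work lies, because part-swapping automorphisms are now available and can fix an edge by flipping it. The central computation is to determine when such an automorphism fixes all of $T$: writing a part-swapping automorphism as $a_i\mapsto b_{f(i)}$, $b_j\mapsto a_{g(j)}$ for $f,g\in S_n$, it fixes the edge $\{a_i,b_j\}$ iff $f(i)=j$ and $g(j)=i$. From this I would extract the clean criterion that a (necessarily nontrivial) part-swapping automorphism fixing every edge of $T$ exists if and only if $T$ is a matching, i.e.\ no vertex has two incident edges in $T$: a matching gives consistent partial injections $f,g$ that extend to permutations, while a repeated endpoint makes $f$ or $g$ ill-defined. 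Combining this with the part-preserving requirement that $T$ cover at least $n-1$ vertices in each part finishes both bounds. If $|T|=n-1$, then covering $n-1$ distinct vertices in each part forces $T$ to be a matching, which is then fixed by a part-swap, so $|T|\ge n$; and taking the matching $\{a_i,b_i\}$ for $1\le i\le n-1$ together with one extra edge $\{a_1,b_2\}$ gives $n$ edges that cover $n-1$ vertices in each part and contain a degree-$2$ vertex, defeating both types of automorphism.

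I expect the main obstacle to be the $n=m$ analysis, specifically isolating and proving the matching criterion for part-swapping automorphisms and then checking that it forces exactly one extra edge beyond the $n-1$ needed to kill part-preserving automorphisms. Everything else is bookkeeping with twins and degrees, but it is the interplay between the two kinds of automorphisms that makes $\det'(K_{n,n})$ jump to $n$.
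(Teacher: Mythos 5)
Your proposal is correct and follows essentially the same route as the paper: twins force any edge determining set to cover $n-1$ vertices per part, the degree argument (Observation~\ref{obs:difdeg}) handles $n\neq m$, and for $n=m$ a set of size $n-1$ is forced to be a matching and hence flipped by the part-swapping automorphism, while the matching $\{u_i,v_i\}$ plus one extra adjacent edge gives the upper bound of $n$. Your explicit ``part-swap fixes $T$ iff $T$ is a matching'' criterion is a slightly cleaner packaging of what the paper achieves via Observation~\ref{obs:adjedges}, but the underlying argument and the constructions are the same.
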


\begin{proof}
    Let $U = \{u_1, u_2,..., u_n\}$ and $V = \{v_1, v_2,..., v_m\}$ be the partites of $K_{n,m}$. Notice that all the vertices in $U$ are pairwise nonadjacent twins, as are all the vertices in $V$.
    \smallskip
    
    Case 1: Assume $n> m$. By Observation~\ref{obs:twins}, any edge determining set $T$ must have 
    at least $n-1$ edges to cover $n-1$ vertices in $U$.  Let \[T = \{\{u_i,v_i\} \mid 1 \le i \le m\} \cup \{ \{u_j, v_m\} \mid m < j < n\}.\]
    
    Let $\phi \in \aut(G)$ and assume $\phi$ fixes the edges of $T$. Since $n > m$, the degrees of the vertices in $U$ and $V$ are different. By Observation~\ref{obs:difdeg}, $\phi$ must fix the endvertices of every edge in $T$. We have fixed $u_i$ for $1 \le i < n$, and hence, $\phi(u_n)=u_n$. Since $|T| = n-1$, $\det'(G) = n - 1$. \medskip

    Case 2: Assume $n = m > 1$. Assume there exists an edge determining set of size $n-1$. The edges of an edge determining set must cover $n-1$ vertices in $U$ and $n-1$ vertices in $V$. Hence, each edge in an edge determining set of size $n-1$ would cover a distinct vertex in $V$ and a distinct vertex in $U$. By renumbering the vertices if necessary, we can assume that $T =\{ \{u_i, v_i\} \mid 1 \le i \le n-1\}$. Now let $\phi$ be the nontrivial automorphism  $\phi(u_i) = v_i$ and $\phi(v_i) = u_i$ for all $1 \le i \le n-1$. Therefore, there does not exist an edge determining set of size $n-1$. 
    
    Let $T = \{\{u_i,v_i\}\mid$ for $1 \le i < n\} \cup \{u_1,v_2\}$.
    Assume $\phi \in \aut(G)$ fixes the edges of $T$. By Observation~\ref{obs:adjedges}, $\phi(u_1)=u_1$, $\phi(v_1)=v_1$, and $\phi(v_2)=v_2$. If $\phi$ fixes one vertex in $U$, then $\phi$ preserves the partite sets $U$ and $V$ setwise. Thus,
    if $\{\phi(u_i), \phi(v_i)\} = \{u_i, v_i\}$ for $1 \le i < n$, 
    then $\phi(u_i)=u_i$ and $\phi(v_i)=v_i$. We have fixed $u_i$ for $1 \le i < n$, and hence, $\phi(u_n)=u_n$. Similarly, we have fixed $v_i$ for $1 \le i < n$, and hence, $\phi(v_n)=v_n$.
    Since $|T| = n$, $\det'(K_{n,n}) = n$.
\end{proof}

\begin{theorem}
  For $n > 2$, $\det'(K_n)=\lfloor \frac{2n}{3} \rfloor$.
\end{theorem}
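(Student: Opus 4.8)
The key observation is that $\aut(K_n) = S_n$, so an edge subset $T$ is an edge determining set precisely when the identity is the only permutation $\phi$ of the vertices satisfying $\{\phi(u),\phi(v)\} = \{u,v\}$ for every $\{u,v\} \in T$. I would view $T$ as the edge set of a spanning subgraph $H$ of $K_n$ and reduce the problem to a purely structural condition on $H$.

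First I would characterize which permutations preserve every edge of $H$ setwise by analyzing the components of $H$. Any such $\phi$ maps each component to itself. In a component spanning at least $3$ vertices, connectivity forces some vertex $w$ to have degree at least $2$; the two edges at $w$ are adjacent and fixed, so by Observation~\ref{obs:adjedges} $w$ is fixed, and then connectivity propagates this to fix the entire component. By contrast, a $K_2$ component admits the transposition of its two endvertices (extended by the identity elsewhere), and two or more isolated vertices may be transposed. This yields the clean criterion: $T$ is an edge determining set if and only if $H$ has no $K_2$ component and at most one isolated vertex; equivalently, every component of $H$ carrying an edge spans at least $3$ vertices and $H$ covers at least $n-1$ of the vertices.

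For the lower bound I would bound edges per component: a connected component on $k \ge 3$ vertices has at least $k-1 \ge \tfrac23 k$ edges. Summing over the nontrivial components, which together cover $n-i$ vertices where $i \le 1$ counts the isolated vertices, gives $|T| \ge \tfrac23(n-i) \ge \tfrac23(n-1)$, hence $|T| \ge \lceil \tfrac23(n-1)\rceil = \lfloor \tfrac{2n}{3}\rfloor$, the final equality being verified over the residues of $n \bmod 3$. For the matching upper bound I would exhibit an explicit $H$ meeting the criterion with exactly $\lfloor 2n/3\rfloor$ edges, built from vertex-disjoint paths: $\tfrac{n}{3}$ copies of $P_3$ covering all vertices when $3 \mid n$; the same paths leaving one vertex uncovered when $n \equiv 1 \pmod 3$; and one $P_3$ replaced by a $P_4$, covering $n-1$ vertices, when $n \equiv 2 \pmod 3$. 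In each case a direct count gives $\lfloor 2n/3 \rfloor$ edges.

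I expect the main obstacle to be the characterization step: correctly proving that every component of size at least $3$ is completely fixed while isolating exactly the two failure modes, namely $K_2$ components and surplus isolated vertices. Once that criterion is in hand, the edge-counting bound and the arithmetic identity $\lceil \tfrac23(n-1)\rceil = \lfloor \tfrac{2n}{3}\rfloor$ are routine.
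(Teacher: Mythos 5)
Your proof is correct, and although it shares the paper's two pillars---Observation~\ref{obs:twins} forces any edge determining set to cover at least $n-1$ vertices, and an edge with no adjacent partner in $T$ can be flipped---your lower bound is organized around a genuinely different decomposition. The paper counts ``pairs of adjacent edges, each covering $3$ distinct vertices,'' which tacitly assumes the chosen edge set splits into such pairs; a component of $T$ could instead be a star, a triangle, or a longer path, and the paper's count does not explicitly address those shapes. Your reformulation as a structural criterion on the spanning subgraph $H$ (no $K_2$ component, at most one isolated vertex), combined with the per-component inequality $|E(C)| \ge |V(C)| - 1 \ge \tfrac{2}{3}|V(C)|$ for components on at least $3$ vertices, handles every component uniformly and gives $|T| \ge \lceil \tfrac{2}{3}(n-1) \rceil = \lfloor \tfrac{2n}{3} \rfloor$ cleanly; what this buys is an airtight lower bound at the cost of first proving the if-and-only-if characterization, whose forward direction (propagating fixedness from a degree-two vertex through the component via Observation~\ref{obs:adjedges} and setwise-fixed edges) you have stated correctly. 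The upper-bound constructions are essentially identical: disjoint copies of $P_3$, with one lengthened to $P_4$ (equivalently, one extra edge appended) when $n \equiv 2 \pmod 3$.
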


\begin{proof}
    Let $G = K_n$ such that $n > 2$. Note all vertices are pairwise adjacent twins. Hence, an edge determining set of $G$ must cover at least $n-1$ vertices.
    Further, if $T$ is an edge determining set and $\{u,v\} \in T$, then $\{v,w\} \in T$ or $\{u,w\} \in T$ for some other $w \in V(G)$. Otherwise, there exists an automorphism that switches $u$ and $v$ and leaves the other vertices fixed. 
    
    Each pair of adjacent edges covers $3$ distinct vertices. If each pair of adjacent edges in anedge determining set covers $3$ distinct vertices and $n = 0 \pmod 3$ or $n = 1 \pmod 3$, then these edges cover at least $n-1$ vertices. If each pair of adjacent edges in an edge determining set covers $3$ distinct vertices and $n = 2 \pmod 3$, then $2$ twin vertices would not be covered. Hence, the edge set would not be determining. Therefore, $\det'(G)\geq 2\lfloor \frac{n}{3} \rfloor$ if $n = 0 \pmod 3$ or $n = 1 \pmod 3$. Otherwise, $\det'(G)> 2\lfloor \frac{n}{3} \rfloor$.
    
    Let $V(G) = \{v_1,...,v_n\}$ and let \[T=\{ \{v_{i-1},v_i\}, \{v_i,v_{i+1}\} \mid  i = 2 \pmod 3,\, 0<i<n, \}.\]
     Then $|T| = 2\lfloor \frac{n}{3} \rfloor$. \medskip

    Case 1: Suppose $n = 0 \pmod 3$. Then $T$ covers all the vertices of $G$. Let $\phi \in \aut(G)$ and assume $\phi$ fixes the edges in $T$. 
    By Observation~\ref{obs:adjedges}, $\phi(v_i)=v_i$, $\phi(v_{i-1}) = v_{i-1}$, and $\phi(v_{i+1})=v_{i+1}$ for $i = 2 \pmod 3$, $0<i<n$. Thus, $T$ is an edge determining set since $T$ covers the vertices of $G$. Note $|T|=2\lfloor \frac{n}{3} \rfloor$, so $T$ is a minimum edge determining set. Since $n = 0 \pmod 3$, $\det'(G) = \lfloor \frac{2n}{3} \rfloor$.\medskip
    
    Case 2: Suppose $n = 1 \pmod 3$. Then $T$ covers all of the vertices of $G$ except one. Let $\phi \in \aut(G)$ and assume $\phi$ fixes the edges in $T$. 
    By Observation~\ref{obs:adjedges}, $\phi(v_i)=v_i$, $\phi(v_{i-1}) = v_{i-1}$ and $\phi(v_{i+1})=v_{i+1}$ for $i = 2 \pmod 3$, $0<i<n$. Thus, $T$ is an edge determining set, since $T$ covers every vertex of $G$ except one. Note $|T|=2 \lfloor \frac{n}{3} \rfloor$, so $T$ is a minimum edge determining set. Since $n = 1 \pmod 3$, $\lfloor \frac{2n}{3} \rfloor = \lfloor \frac{2(n-1)}{3} + \frac{2}{3} \rfloor = 2 \lfloor \frac{n-1}{3}  \rfloor=2 \lfloor \frac{n}{3}  \rfloor$.\medskip
    
    Case 3: Suppose $n = 2 \pmod 3$. Then there exist two vertices, $v_{n-1}$ and $v_n$, not covered by an edge in $T$.  Thus, $T$ is not an edge determining set. 
    Add edge $\{v_n, v_1\}$ to $T$. Let $\phi \in \aut(G)$ and assume $\phi$ fixes the edges in $T$. 
    By Observation~\ref{obs:adjedges}, $\phi(v_i)=v_i$, $\phi(v_{i-1}) = v_{i-1}$ and $\phi(v_{i+1})=v_{i+1}$ for $i = 2 \pmod 3$, $0<i<n$, and $\phi(v_1)=v_1$.
    Thus, $T$ is an edge determining set, since $T$ covers every vertex of $G$ except one. Note $|T| = 2\lfloor \frac{n}{3} \rfloor + 1$,
    so $T$ a minimum edge determining set. Since $n = 2 \pmod 3$, $\lfloor \frac{2n}{3} \rfloor = \lfloor \frac{2(n-2)}{3} + \frac{4}{3} \rfloor = 2 \lfloor \frac{n-2}{3}  \rfloor+ 1=2 \lfloor \frac{n}{3}  \rfloor+ 1$.
    
\end{proof}

The graphs $K_n$ and $K_{n,m}$ illustrate that the determining index can be strictly less than the determining number, sometimes significantly. Further, the example of $K_{n,m}$ when $n > m$  indicates that a minimum edge determining set need not  simply be a cover of the vertices in a minimum vertex determining set. Despite coming close, neither of these families  show the sharpness of the upper bound  $\det(G) \le 2\det'(G)$ in Theorem~\ref{thm:upperbound}. The following example of $K_4-e$ indicates that the bound is sharp.

\begin{figure}[h]

\centering
\begin{tikzpicture}[main/.style = {draw, circle}] 
  \node[main] (a1) at (1,1) {1};  
  \node[main] (a2) at (3,1)  {2}; 
  \node[main] (a3) at (3,3)  {3};  
  \node[main] (a4) at (1,3) {4};

  \draw (a1) -- (a2); 
  \draw (a2) -- (a3);  
  \draw (a3) -- (a4); 
  \draw (a1) -- (a3);
  \draw (a2) -- (a4);

\end{tikzpicture}
\caption{$K_4 - e$}

\end{figure}

Note there are three nontrivial automorphisms.  One interchanges nonadjacent twin vertices $1$ and $4$, leaving $2$ and $3$ fixed; call it $\alpha$.  Another interchanges adjacent twin vertices $2$ and $3$, leaving $1$ and $4$ fixed; call it $\beta$. A third is the composition of these two automorphisms, $\alpha \circ \beta$, which geometrically is a reflection across a horizontal line through the center of the drawing. Clearly, if we fix only one vertex, then either $\alpha$ or $\beta$ (because the composition moves all four vertices) fixes that vertex but moves others. Hence, we need to fix at least two vertices in order to fix the entire graph, say vertex $1$ and $2$. No nontrivial automorphism  fixes both of these vertices.  However, any automorphism fixing the edge $\{1,2\}$ must fix the graph. By Observation~\ref{obs:difdeg}, no automorphism switches these two vertices as they have different degrees. Further, these vertices constitute a vertex determining set, so this edge constitutes a minimum edge determining set. So $\det(G) = 2 = 2  \det'(G)$. We recall a definition that will allow us to generalize this example to an infinite family of such graphs.

\begin{definition} The {\it join} of graphs $G$ and $H$ is the graph  $G+H$ defined by  $V(G+H)=V(G)\cup V(H)$ and \[E(G+H) = E(G)\cup E(H) \cup \{\{u,v\} \mid  u\in G, v \in H\}.\]
\end{definition}

\begin{theorem}
  For all $n \in \mathbb{N}$, there exists a connected graph $G$ such that \newline $\det(G) = 2n$ and $\det'(G) = n$.
\end{theorem}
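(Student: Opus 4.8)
The plan is to realize the extremal ratio $\det(G)=2\det'(G)$ for every $n$ with a single uniform family that directly generalizes the motivating example $K_4-e=\overline{K_2}+K_2$. Specifically, I would take the complete split graph
$$G_n = \overline{K_{n+1}} + K_{n+1},$$
writing $A=\{a_1,\dots,a_{n+1}\}$ for the independent part (coming from $\overline{K_{n+1}}$) and $B=\{b_1,\dots,b_{n+1}\}$ for the clique part (coming from $K_{n+1}$). This graph is connected with $2(n+1)\ge 4$ vertices, and for $n=1$ it is exactly $K_4-e$. A quick degree count shows every vertex of $A$ has degree $n+1$ while every vertex of $B$ has degree $2n+1$; since $n+1\ne 2n+1$ for $n\ge 1$, the two parts have different degrees.

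First I would pin down $\aut(G_n)$. The vertices of $A$ are pairwise nonadjacent twins ($N(a)=B$ for each $a\in A$) and the vertices of $B$ are pairwise adjacent twins (each is universal, so $N[b]=V(G_n)$); by Observation~\ref{obs:twins} every transposition within $A$ and within $B$ is an automorphism. Because $A$ and $B$ have distinct degrees, any automorphism preserves each part setwise, and conversely any pair of permutations of $A$ and of $B$ respects independence inside $A$, completeness inside $B$, and the complete join between them. Hence $\aut(G_n)\cong S_{n+1}\times S_{n+1}$. From this I would read off $\det(G_n)=2n$: if a vertex set fixes at most $n-1$ vertices of $A$, then two untouched twins in $A$ can be swapped (and likewise for $B$), so any vertex determining set needs at least $n$ vertices in each part; conversely $\{a_1,\dots,a_n\}\cup\{b_1,\dots,b_n\}$ is determining, since the unique unfixed vertex of each setwise-preserved part is forced to be fixed.

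For the index, the lower bound is free: since $\det(G_n)=2n\ne 1$, Corollary~\ref{cor:upperlower} gives $\det'(G_n)\ge \tfrac12\det(G_n)=n$. For the matching upper bound I would exhibit $T=\{\{a_i,b_i\}\mid 1\le i\le n\}$, a matching of $n$ edges each joining an $A$-vertex to a $B$-vertex. As the endpoints of each such edge have different degrees, Observation~\ref{obs:difdeg} forces any automorphism fixing the edge to fix both endpoints; thus fixing $T$ fixes $a_1,\dots,a_n$ and $b_1,\dots,b_n$, which as above is a vertex determining set, so $T$ is edge determining and $\det'(G_n)\le n$. Combining the two bounds yields $\det'(G_n)=n$ together with $\det(G_n)=2n$, as required. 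The only genuinely delicate point is the identification of $\aut(G_n)$ as exactly $S_{n+1}\times S_{n+1}$: both parameter computations rest on the distinct degrees of $A$ and $B$ ruling out any automorphism that mixes the two parts, so I would justify that step carefully rather than the routine matching argument.
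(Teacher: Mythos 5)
Your proposal is correct and is essentially the paper's own argument: the same graph $\overline{K_{n+1}}+K_{n+1}$, the same twin/degree analysis for $\det(G)=2n$, and the same matching $\{\{a_i,b_i\}\}$ certified via Observation~\ref{obs:difdeg} for the upper bound on $\det'$. The only cosmetic difference is that you obtain the lower bound $\det'(G_n)\ge n$ from Corollary~\ref{cor:upperlower} rather than from the covering argument, which is a perfectly valid (arguably cleaner) shortcut.
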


\begin{proof}
Let $G = N_{n+1} + K_{n+1}$ for $n \in \mathbb{N}$, where $N_k$ is the empty graph on $k$ vertices. Note that $K_4 - e$ is $N_2 + K_2$. Let $U = \{u_1, \dots , u_{n+1}\} = V(N_{n+1})$ and  $ V = \{v_1, \dots , v_{n+1}\} = V(K_{n+1})$. By construction, the vertices of $U$ are pairwise nonadjacent twins and the vertices of $V$ are pairwise adjacent twins. By Observation~\ref{obs:twins}, there exists an automorphism interchanging them and leaving all other vertices fixed. Thus, a minimum vertex determining set must contain $n$ vertices of $U$ and $n$ vertices of $V$. Since $\deg(u) = n+1$ for $u\in U$ and $\deg(v) = 2n+1$ for $v \in V$, there does not exist an automorphism interchanging the vertices of $U$ with the vertices of $V$. Thus, $\det(G) = 2n$. 

Similarly, an edge determining set must cover $n$ vertices of $U$ and $n$ vertices of $V$. Let $T = \{\{u_i,v_i\}\mid 1 \le i \le n\}$. Because there is no automorphism interchanging the vertices of $U$ and the vertices of $V$, any automorphism fixing the edges of $T$ must fix all their endvertices. Thus, $|T| = \det'(G) = n$. Therefore, $\det(G) = 2\det'(G)$.
\end{proof}

We have seen examples of infinite families such that $\det'(G) =\det(G)$, namely cycles and paths. We find a similar result for trees where the determining number and determining index can be arbitrarily large.  We require the following results, which can be found in any standard text on graph theory, such as \cite{chartrand2012first}.
See Erwin and Harary \cite{ERWIN20063244} for many interesting results on the determining number of trees. 

\begin{proposition} \label{prop:unipaths} \cite{chartrand2012first}
There is a unique path between any two distinct vertices in a tree.
\end{proposition}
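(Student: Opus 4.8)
The plan is to prove existence and uniqueness separately, using only the definition of a tree as a connected, acyclic graph. Existence is immediate from connectivity: between any two distinct vertices $u$ and $v$ there is at least one $u$-$v$ walk, and a walk of minimum length among all $u$-$v$ walks cannot repeat a vertex, hence is a path. So at least one $u$-$v$ path exists.

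For uniqueness I would argue by contradiction, showing that two distinct paths force a cycle. Suppose $P = (u = x_0, x_1, \ldots, x_k = v)$ and $Q = (u = y_0, y_1, \ldots, y_\ell = v)$ are two distinct $u$-$v$ paths. Since they share the endpoint $u$ but are distinct, let $i \ge 1$ be the least index at which they first differ, so that $x_{i-1} = y_{i-1}$ while $x_i \ne y_i$. Because both paths end at the common vertex $v$, after diverging at $x_{i-1}$ they must eventually reconverge; let $x_j$ (with $j \ge i$) be the first vertex of $P$ beyond index $i-1$ that also appears on $Q$ at some index exceeding $i-1$, occurring there as $y_m$. Then the segment of $P$ from $x_{i-1}$ to $x_j$, followed by the reverse of the segment of $Q$ from $y_m = x_j$ back to $y_{i-1} = x_{i-1}$, is a closed walk whose interior vertices are distinct and distinct from its endpoint. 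This is a cycle, contradicting acyclicity, so $P = Q$.

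The main obstacle is making the reconvergence step rigorous: I must select the first common vertex after the point of divergence carefully, so that the two segments joining the divergence vertex to the reconvergence vertex are internally vertex-disjoint and their union is a genuine cycle rather than a closed walk with repeated interior vertices. A cleaner alternative is an extremal argument: among all pairs of vertices admitting two distinct paths, choose a pair for which the shorter path has minimum length; then the two paths must be internally vertex-disjoint, since a shared interior vertex would split them into a strictly shorter counterexample, and their union is therefore immediately a cycle, again contradicting acyclicity.
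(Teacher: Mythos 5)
The paper does not supply a proof of this proposition; it is quoted as a standard fact from the cited textbook, so there is no in-paper argument to compare against. Your proof is the classical one and is correct: existence via a minimum-length walk, and uniqueness by extracting a cycle from two distinct paths. Of your two versions of the uniqueness step, the extremal one (minimize the length of the shorter path over all offending pairs, conclude the two paths are internally disjoint, and observe their union is a cycle of length at least~$3$ since two distinct paths cannot both be the single edge $uv$) is the cleaner and is fully rigorous as stated; the first-divergence/first-reconvergence version also works, but only because, as you note, one must check that the interior vertices of the two segments cannot collide with the earlier common prefix --- which holds since $P$ and $Q$ are paths and share that prefix.
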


\begin{proposition} \label{prop:treecenter}\cite{chartrand2012first}
Every tree has either a single vertex or a pair of adjacent vertices that are contained in every longest path of the tree.
\end{proposition}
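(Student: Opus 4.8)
The plan is to prove the statement directly in terms of longest paths, using the unique-path property of Proposition~\ref{prop:unipaths}. Let $d$ denote the length of a longest path in the tree $T$, fix one longest path $P$ with endpoints $a,b$, and call its \emph{midpoint} the central vertex of $P$ when $d$ is even and the central edge of $P$ when $d$ is odd. I want to show that every longest path $Q$ contains this midpoint; since the midpoint is a single vertex or a single (hence adjacent-endpoint) edge, this is exactly the claim.

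First I would establish two structural facts. (i) Any two longest paths $P$ and $Q$ share a vertex: if they were disjoint, pick $x \in V(P)$ and $y\in V(Q)$ at minimum distance and let $W$ be the unique $x$--$y$ path, which by minimality meets $P$ only at $x$ and $Q$ only at $y$; splicing the longer arm of $P$ at $x$ (length $\ge d/2$), then $W$ (length $\ge 1$), then the longer arm of $Q$ at $y$ (length $\ge d/2$) yields a path of length $> d$, a contradiction. (ii) By Proposition~\ref{prop:unipaths}, $R := P \cap Q$ is a subpath, since for any two of its vertices the unique path between them lies in both $P$ and $Q$. Writing $u,v$ for the endpoints of $R$ and $c,d$ for the endpoints of $Q$, I can decompose $P = A_u + R + B_v$ and $Q = C_u + R + D_v$ into arms attached at $u$ and $v$, with lengths $p_a,p_b$ and $q_c,q_d$, so that $p_a + r + p_b = d = q_c + r + q_d$ where $r = |R|$.

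The core of the argument is a balancing computation. Because an arm of $P$ and an arm of $Q$ attached at the same endpoint of $R$ can share only that endpoint (a further common vertex would lie in $P\cap Q = R$), concatenating such arms produces genuine paths whose lengths are bounded by $d$. Applying this to the $a$--$d$, $b$--$c$, $a$--$c$, and $b$--$d$ routes forces $p_a = q_c$ and $p_b = q_d$, and then $p_a \le r + p_b$ and $p_b \le r + p_a$. These last two inequalities place the midpoint of $P$ inside $R$, at distance $d/2 - p_a$ from $u$; the identical computation for $Q$ places its midpoint at distance $d/2 - q_c = d/2 - p_a$ from $u$. Hence the two midpoints coincide, so $Q$ contains the midpoint of $P$. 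When $d$ is even this midpoint is a single vertex; when $d$ is odd the integrality of the arm lengths forces the two inequalities to be strict, so both endpoints of the central edge lie in $R \subseteq Q$, giving the pair of adjacent vertices.

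I expect the main obstacle to be the bookkeeping in this balancing step: correctly aligning the orientations of $P$ and $Q$ along $R$, ruling out the degenerate overlaps (for instance $u = v$, or an arm of length $0$ when an endpoint of $R$ is also an endpoint of $P$ or $Q$), and handling the parity of $d$ so that the odd case yields an \emph{edge} rather than a vertex. An alternative route works through the center instead: show that deleting all leaves of $T$ decreases every eccentricity by exactly one and preserves the center, induct down to $K_1$ or $K_2$ to see the center is one vertex or two adjacent vertices, and then identify this center with the common midpoint of the longest paths. Since that approach merely relocates the same midpoint analysis, I would carry out the direct argument above.
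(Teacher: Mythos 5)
The paper does not actually prove this proposition; it is quoted from \cite{chartrand2012first} as a standard fact, so there is no in-paper argument to compare against. Judged on its own, your direct longest-path argument is sound and complete in outline. Step (i) (two longest paths must meet, else splicing the longer arms through the connecting path gives length at least $d/2+1+d/2>d$) and step (ii) ($P\cap Q$ is a subpath, by uniqueness of paths in a tree) are both correct, and the balancing computation does what you claim: the $a$--$d$ and $b$--$c$ routes give $p_a\le q_c$ and $q_c\le p_a$ (hence $p_a=q_c$, $p_b=q_d$), the $a$--$c$ and $b$--$d$ routes give $p_a\le r+p_b$ and $p_b\le r+p_a$, and these inequalities pin both midpoints to the same position inside $R$. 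Your parity remark is also right: if, say, $p_a=r+p_b$, then $d=2(r+p_b)$ is even, so for odd $d$ both inequalities are strict and the whole central edge sits inside $R\subseteq Q$. Two small expository points to tighten: you justify disjointness only for arms attached at the \emph{same} endpoint of $R$, but the $a$--$d$ and $b$--$c$ routes concatenate arms attached at \emph{different} endpoints, which needs the (equally easy) observation that $A_u\cap D_v\subseteq P\cap Q=R$ forces any common vertex into $\{u\}\cap\{v\}$; and the letter $d$ is overloaded as both the longest-path length and an endpoint of $Q$. Your alternative route via the center and leaf-pruning induction is the one most textbooks (including the cited one) take; your direct argument is more self-contained here since it leans only on Proposition~\ref{prop:unipaths}, which the paper has already stated.
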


\begin{proposition} \label{prop:centerfix}
The center of a tree is fixed by every automorphism. 
\end{proposition}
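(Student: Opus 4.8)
The center of a tree is fixed by every automorphism.

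The plan is to use the characterization of the center given in Proposition~\ref{prop:treecenter}, namely that the center is either a single vertex or a pair of adjacent vertices lying on every longest path. First I would observe that any automorphism $\phi$ of a tree $T$ preserves distances, and hence maps longest paths to longest paths: if $P$ is a path of maximum length in $T$, then $\phi(P)$ is a path of the same length, so it is also a longest path. Consequently $\phi$ permutes the set of longest paths of $T$.

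Next I would translate this into a statement about the center. Recall that the center of a tree can be described as the set of vertices of minimum eccentricity, and that eccentricity is an automorphism invariant: since $\phi$ preserves distances, $\operatorname{ecc}(\phi(v)) = \operatorname{ecc}(v)$ for every vertex $v$. Therefore $\phi$ maps the center to the center, i.e.\ the center is setwise invariant under $\phi$. The main work is then to upgrade setwise invariance to pointwise fixing, using the fact from Proposition~\ref{prop:treecenter} that the center has size one or two.

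If the center is a single vertex $c$, then $\phi(c)$ is again a center vertex, so $\phi(c) = c$ and we are done. If the center consists of two adjacent vertices $\{c_1, c_2\}$, then $\phi$ restricts to a permutation of this two-element set, so either $\phi$ fixes both or swaps them. I would rule out the swap by a parity/length argument: since $c_1$ and $c_2$ are adjacent and lie on every longest path, each longest path is split by the edge $\{c_1,c_2\}$ into two halves, and in the two-center case these halves have equal length, but more to the point, a swap $\phi(c_1)=c_2$, $\phi(c_2)=c_1$ would force the two branches hanging off $c_1$ and $c_2$ to be isomorphic in a way that is consistent—so in fact the swap \emph{can} occur, and both $c_1$ and $c_2$ are still fixed \emph{as a set}. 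The correct claim to prove is therefore only that the center is invariant as a set, which suffices for its later use (via Observation~\ref{obs:adjedges}) in determining-set arguments; I would state the result as "the center is fixed setwise," noting that in the single-vertex case this is genuine pointwise fixing.

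The step I expect to be the main obstacle is precisely this last one: being careful about whether "fixed" means pointwise or setwise in the two-center case. The clean route is to argue entirely through the eccentricity characterization—$\phi$ preserves eccentricity, the center is the set of minimum-eccentricity vertices, hence $\phi$ maps center to center—which gives setwise invariance immediately and avoids any delicate case analysis on longest paths. If pointwise fixing of a single central vertex is what is needed downstream, it follows automatically whenever the center is a singleton.
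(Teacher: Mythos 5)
Your proof is correct and follows essentially the same route as the paper's: automorphisms preserve path lengths, hence permute the longest paths, hence map the center (which lies on every longest path) to itself. Your care about the setwise-versus-pointwise distinction is well placed --- the paper indeed only uses setwise invariance in the two-vertex case (it writes $\{\phi(u),\phi(v)\}=\{u,v\}$ in the proof of Theorem~\ref{thm:tree} and rules out the swap there by a separate argument), so your reading of ``fixed'' matches how the proposition is actually applied.
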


\begin{proof}
Since path lengths are fixed by automorphisms, the image of any longest path under any automorphism $\phi$  is still a longest path.  So the image of the center under an automorphism must still be in every longest path.  
\end{proof}

\begin{theorem}\label{thm:tree}
  If $G$ is a tree such that $|V(G)| \ge 3$, then $\det(G) = \det'(G)$.
\end{theorem}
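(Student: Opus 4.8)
The plan is to prove $\det(G) = \det'(G)$ for any tree $G$ with $|V(G)| \ge 3$ by showing the two inequalities separately, exploiting the special structure of trees together with the general bounds already established. One inequality, $\det'(G) \le \det(G)$, should follow almost immediately: since a tree with $|V(G)| \ge 3$ is connected and cannot have $\det(G) = 1$ unless it is edge-flip-invariant, I first need to rule out that a tree can be edge-flip-invariant (equivalently, vertex-transitive) when $|V(G)| \ge 3$. Indeed, a connected vertex-transitive graph must be regular, but a tree on at least $3$ vertices always has a leaf (degree $1$) and an internal vertex (degree $\ge 2$), so it cannot be regular. Hence $\det(G) \ne 1$ for such trees, and Theorem~\ref{thm:lowerbound} gives $\det'(G) \le \det(G)$ directly. (The case $\det(G) = 0$, an asymmetric tree, gives $\det'(G) = 0$ as well, so equality is trivial there.)

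The substance is the reverse inequality $\det(G) \le \det'(G)$. The general upper bound Theorem~\ref{thm:upperbound} only gives $\det(G) \le 2\det'(G)$, which is too weak, so I would instead build a vertex determining set of size exactly $\det'(G)$ directly from a minimum edge determining set $T$. The key structural fact I would use is Observation~\ref{obs:adjedges} together with the crucial feature of trees that automorphisms cannot flip edges in the same way general graphs can. More precisely, I would argue that in a tree, if $\phi \in \aut(G)$ flips an edge $e = \{u,v\}$ (so $\phi(u) = v$, $\phi(v) = u$), then $u$ and $v$ must play symmetric roles, and by Proposition~\ref{prop:centerfix} this can only happen at the central edge of the tree. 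The idea is that for each edge $\{u,v\} \in T$, fixing that edge either fixes both endpoints (the useful case) or flips them, and flipping an edge forces that edge to contain the center; since the center is fixed by \emph{every} automorphism (Proposition~\ref{prop:centerfix}), at most one edge of $T$ can be flipped, and even that flip is highly constrained.

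The cleanest route is to define $S$ to be a set of vertices, one chosen endpoint from each edge of $T$, chosen so that $|S| = |T| = \det'(G)$ and so that $S$ is a vertex determining set. To verify $S$ is determining, I would take $\phi \in \aut(G)$ fixing every vertex of $S$ and show $\phi$ fixes every edge of $T$: for each $\{u,v\} \in T$ with chosen endpoint $u \in S$, since $\phi(u) = u$, the edge $\{u,v\}$ maps to an edge incident to $u$, and I must show it maps to itself. Here the tree structure is decisive because unique paths (Proposition~\ref{prop:unipaths}) and distance preservation force $\phi(v) = v$ once $u$ is pinned and the whole configuration of fixed edges is taken into account; then $\phi$ fixes all of $T$, so $\phi = \mathrm{id}$, making $S$ determining with $|S| = \det'(G)$.

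The main obstacle I anticipate is handling the central edge case cleanly: when the tree has a central edge $\{c_1, c_2\}$ that belongs to $T$, there may genuinely be an automorphism that flips it, so the naive choice of one endpoint per edge might not immediately yield a determining set, and I would need to argue that either the flip is already obstructed by other fixed vertices of $S$ or that choosing both endpoints of the central edge is unnecessary because the rest of $T$ already pins down one side of the tree and hence breaks the flip symmetry. Getting the bookkeeping right so that $|S|$ stays equal to $\det'(G)$ rather than growing, while still defeating a potential central flip, is the delicate point; I expect to resolve it by a careful induction or by rooting the tree at its fixed center and propagating the fixing of vertices outward from $S$ along unique paths.
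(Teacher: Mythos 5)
Your overall strategy for the hard direction is the paper's strategy, but as written the proposal has one outright false claim and leaves the decisive step unresolved.

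The false claim is in the easy direction: you assert that a tree on at least $3$ vertices ``cannot have $\det(G)=1$ unless it is edge-flip-invariant'' and conclude $\det(G)\neq 1$. This is wrong: $P_n$ for $n\ge 3$ is a tree with $\det(P_n)=1$ that is not edge-flip-invariant. Your correct observation that trees are never vertex-transitive (hence never edge-flip-invariant) should instead be fed into Corollary~\ref{cor:det1}: if $\det(G)=1$, then non-edge-flip-invariance gives $\det'(G)=1$ and equality holds outright; if $\det(G)\neq 1$, Theorem~\ref{thm:lowerbound} gives $\det'(G)\le\det(G)$. So the inequality you want survives, but not by the route you state.

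For $\det(G)\le\det'(G)$, you correctly identify the ingredients (one endpoint per edge of a minimum edge determining set $T$, unique paths, the fixed center) but you stop exactly at the crux: \emph{which} endpoint to select. The paper's resolution is to fix a center vertex $v$ (either the unique center, or one of the two adjacent center vertices) and let $S$ consist of the endpoint of each edge of $T$ that is \emph{farther} from $v$; by Proposition~\ref{prop:unipaths} the two endpoints of any edge are at different distances from $v$, so this is well defined and $|S|=|T|$. If $\phi$ fixes $S$, then $\phi(v)=v$: in the bicentral case $\{\phi(u),\phi(v)\}=\{u,v\}$ by Proposition~\ref{prop:centerfix}, and $\phi(v)=u$ would force $d(x,v)=d(x,u)$ for a fixed $x\in S$, impossible for adjacent vertices of a tree. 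Then for each $\{x,y\}\in T$ with $x\in S$ farther from $v$, the unique $x$--$v$ path begins $(x,y,\dots)$, its image under $\phi$ is again the unique $x$--$v$ path, and uniqueness forces $\phi(y)=y$. This makes your anticipated ``central edge flip'' obstacle disappear without any case analysis on whether the central edge lies in $T$: no edge of $T$ can be flipped once $v$ is pinned. Your proposal gestures at exactly this (``rooting the tree at its fixed center and propagating outward'') but does not carry it out, and the distance-to-center selection rule is the missing idea that makes the bookkeeping trivial.
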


\begin{proof}

If $\det(G) = 0$, the result is trivial. So assume $\det(G) \ge 1$. 

By Proposition~\ref{prop:treecenter}, every tree has a center which is either a single vertex or a pair of adjacent vertices.
If the center is a pair of vertices, let $v$ be one of them. Otherwise, let $v$ be the center vertex. 

Let $T$ be a minimum edge determining set of a tree $G$. 
For each edge in $T$, one endvertex must be more distant from $v$ than the other by Proposition~\ref{prop:unipaths}. We let $S$ be the set of more distant vertices.  Then $|S| = |T| = \det'(G)$. To show that $S$ is a vertex determining set, 
 assume $\phi\in \aut(G)$ fixes every vertex in $S$. Let $e = \{x,y\} \in T$, with $d(x,v) > d(y,v)$, so that $x \in S$. 
 
 Case 1. If the center is just vertex $v$, then by Proposition~\ref{prop:centerfix}, $\phi(v) = v$.
 
 Case 2: Assume there are two adjacent center vertices, $u$ and $v$. Then again by Proposition~\ref{prop:centerfix}, $\{\phi(u),\phi(v)\}=\{u,v\}$. 
   By assumption $\phi(x) = x$.
 Suppose $\phi(v) = u$. Then $d(x,v) = d(\phi(x),\phi(v))=d(x, u)$.  However, because $u$ and $v$ are adjacent and trees have no cycles, $d(x,u) \neq d(x,v)$. Thus, $\phi(v)=v$. 
 
 Let $w = \phi(y)$. Since $y \in N(x)$ and $\phi(x) = x$, $w\in N(x)$. Since $x$ is more distant from $v$ than $y$, the unique $x-v$ path in the tree $G$ must be $P=(x, y, \dots, v)$. Apply $\phi$ to every vertex in this path to get $P'=(\phi(x), \phi(y), \dots, \phi(v)) = (x, w, \dots, v)$. 
 By Proposition~\ref{prop:unipaths}, there exists a unique path between two vertices, so $P = P'$. Therefore, $\phi(y)=y$. 
 
 We have shown that if $\phi$ fixes only the endvertex of each edge in $T$ that is more distant from $v$, then $\phi$ fixes both endvertices and hence $\phi$ fixes every edge in $T$. Since $T$ is an edge determining set, $\phi$ must be the identity automorphism. Hence, $S$ is a vertex determining set of $G$ of size $\det'(G)$. Hence, $\det(G) \le \det'(G)$.
 By Corollary 1, $\det(G) = \det'(G)$.

\end{proof}

Our last result will be on hypercubes. The $n$-dimensional hypercube, $Q_n$, can be defined as the graph whose vertex set is the set of ordered $n$-bit strings of $0$s and $1$s with two vertices adjacent if they differ in exactly one bit. Hypercubes are highly symmetric graphs; not only are they vertex-transitive and edge-flip-invariant, they are also edge-transitive, arc-transitive and distance-transitive, meaning that given any four vertices $u, v, x$ and $y$ such that $d(u,v) = d(x,y)$, there exists an automorphism $\phi$ such that $\phi(u) = x$ and $\phi(v) = y$. See \cite{Bi1993}. Another definition of hypercubes is based on the binary operation of Cartesian product of graphs.

\begin{definition}
    The {\it Cartesian product} of graphs $G$ and $H$ is the graph  $G\square H$ defined by  $V(G\square H) = \{(u,v) \mid u \in G, v \in H\}$, with   $(u,v)$ and $(x,y)$ adjacent if either $u = x$ and $\{v,y\}\in E(H)$ or $v = y$ and $\{u,x\}\in E(G)$.
\end{definition}

A graph is prime with respect to the Cartesian product if it cannot be written as the Cartesian product of two nontrivial graphs. The prime factor decomposition of a graph is a representation of the graph as a Cartesian product of prime graphs.

We can define $Q_n$ recursively by letting $Q_1 = K_2$ and $Q_n = Q_{n-1} \square K_2$ for $n \ge 2$. Thus $Q_n = K_2 \square \dots \square K_2$ is a prime factor decomposition of the hypercube.
A useful tool in finding the determining index of the hypercube is the characteristic matrix.

\begin{definition}
  Let $S = (V_1, ..., V_r)$ be an ordered set of $m$-tuples. The {\it characteristic matrix}, $M(S)$, is the $r \times m$ matrix with the $ij^{th}$ entry being the $j^{th}$ coordinate of $V_i$.
\end{definition}

The characteristic matrix was used by Boutin \cite{Boutin2009TheDN} to prove the following propositions. 

\begin{proposition}\label{prop:charMatrix} \cite{Boutin2009TheDN}
Let $G$ be a connected graph with prime factor decomposition $G=G_1\square \dots\square G_n$. Let $S = (V_1, \dots, V_n) \subseteq V(G)$. Then $S$ is a vertex determining set if and only if each column of the characteristic matrix, $M(S)$, contains a vertex determining set for the corresponding factor of $G$ and no two columns of $M(S)$ are isomorphic images of each other. 
\end{proposition}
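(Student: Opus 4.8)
The plan is to reduce the statement to the well-known structural description of the automorphism group of a connected Cartesian product. Recall that if $G = G_1 \square \cdots \square G_n$ is the prime factor decomposition of a connected graph, then every $\phi \in \aut(G)$ has the form $\phi(x_1,\dots,x_n) = (\psi_1(x_{\pi^{-1}(1)}), \dots, \psi_n(x_{\pi^{-1}(n)}))$, where $\pi$ is a permutation of $\{1,\dots,n\}$ that interchanges only mutually isomorphic factors and each $\psi_i$ is an isomorphism $G_{\pi^{-1}(i)} \to G_i$. I would take this as the key input (it is the classical factorization theorem of Imrich and Klav\v{z}ar for the Cartesian product); everything else is bookkeeping with the characteristic matrix $M(S)$, whose $i$-th column records the $G_i$-coordinates of the vertices of $S$ in their fixed row order.

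For the forward direction I argue by contrapositive. If column $j$ of $M(S)$ fails to contain a vertex determining set for $G_j$, then there is a nontrivial $\psi_j \in \aut(G_j)$ fixing every entry of that column; extending $\psi_j$ by the identity on all other coordinates yields a nontrivial automorphism of $G$ fixing every vertex of $S$, so $S$ is not determining. If instead two columns $j$ and $k$ are isomorphic images of one another, say $\psi(a_j) = a_k$ for every row, where $\psi: G_j \to G_k$ is an isomorphism, then the factor-transposition automorphism that sends the $j$-th coordinate to $\psi^{-1}$ of the $k$-th and the $k$-th to $\psi$ of the $j$-th (fixing all other coordinates) is nontrivial and fixes every vertex of $S$, again contradicting that $S$ is determining. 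Thus both conditions are necessary.

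For the converse, suppose both conditions hold and let $\phi$ fix every vertex of $S$; write $\phi$ in the structural form above. First I would show $\pi$ is the identity: if $\pi^{-1}(i) = j \neq i$, then fixing each row $(a_1,\dots,a_n)$ of $S$ forces $a_i = \psi_i(a_j)$, i.e. applying the isomorphism $\psi_i$ to column $j$ reproduces column $i$, so columns $i$ and $j$ would be isomorphic images of each other, contradicting the second hypothesis. Hence $\pi = \mathrm{id}$ and $\phi$ acts coordinatewise as $\psi_i \in \aut(G_i)$. Fixing the vertices of $S$ then forces each $\psi_i$ to fix every entry of column $i$; since that column contains a determining set for $G_i$, we get $\psi_i = \mathrm{id}$ for every $i$, so $\phi = \mathrm{id}$ and $S$ is a vertex determining set.

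The main obstacle is not the combinatorics, which is routine once the setup is in place, but securing and correctly applying the automorphism structure theorem: I must be careful that $\pi$ genuinely permutes only isomorphic factors and that the index and row conventions in the definition of $M(S)$ match the direction of the isomorphisms $\psi_i$, so that the phrase \emph{``column $j$ and column $k$ are isomorphic images of each other''} aligns exactly with the existence of a factor-transposing automorphism fixing $S$. Keeping these conventions consistent across both directions is the only delicate point.
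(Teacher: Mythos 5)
This proposition is quoted from Boutin's paper and is not proved in the text you were given, so there is no in-paper argument to compare against; the relevant benchmark is the standard proof, and your proposal reproduces it correctly. Both directions are sound: the necessity argument via coordinate-wise extensions and factor-transposing automorphisms, and the sufficiency argument that first forces the factor permutation $\pi$ to be trivial (since a nontrivial $\pi$ would exhibit two columns as isomorphic images of each other) and then kills each coordinate automorphism using the determining set in the corresponding column. The one external input you rely on, the Imrich--Klav\v{z}ar/Sabidussi--Vizing structure theorem for automorphisms of a connected prime-factored Cartesian product, is exactly the right tool and is also what underlies Boutin's original proof, so your route is essentially the canonical one rather than a genuinely different alternative.
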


In this proposition, each vertex $V_i$ in the ordered set $S$ is an $n$-tuple $(v_{i1}, v_{i2}, \dots , v_{in})$ with $v_{ij} \in V(G_j)$.  Two columns  of the characteristic matrix, $[v_{1j},\dots,v_{mj}]^T$ and $[v_{1k},\dots,v_{mk}]^T$, are isomorphic if there exists 
a graph isomorphism $\psi: G_j \to G_k$ such that $\psi(v_{ij}) = v_{ik}$ for all $i \in \{1, \dots, n\}$.

\medskip

Applied to $Q_n =  K_2 \square \dots \square K_2$, the characteristic matrix $M(S)$ of a vertex set $S$ will be a $0-1$ matrix.  Since any nonempty subset of vertices of $K_2$ is a vertex determining set, any $0-1$ column is guaranteed to contain a determining set for the corresponding $K_2$ factor of $Q_n$. Moreover, two $0-1$ columns are isomorphic if and only if they have either all identical bits or all opposite bits.

\begin{observation}\label{obs:ZeroOne}
Let $X$ be a $0-1$ matrix with $s$ rows and $t$ columns. There are $2^s$ different $0-1$ vectors of length $s$, which can be partitioned into $2^{s-1}$ opposite pairs. Thus if $t > 2^{s-1}$, then $X$ must have either two columns that are identical or two columns that are opposite.
\end{observation}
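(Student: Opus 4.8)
The plan is to prove this by a direct pigeonhole argument, using the complementation map to build the pairs described in the statement. First I would fix the notion of ``opposite'': for a $0-1$ vector $c = (c_1, \dots, c_s)^T$, let $\overline{c} = (1-c_1, \dots, 1-c_s)^T$ be its bitwise complement. I would then verify the key combinatorial fact underlying the claim, namely that the relation ``$c \sim d$ if $d = c$ or $d = \overline{c}$'' is an equivalence relation on the set of all $2^s$ length-$s$ $0-1$ vectors, and that each equivalence class has exactly two elements.

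The only point requiring a word of justification is that no vector is its own complement, so that every class genuinely has size $2$ rather than $1$. This holds precisely because $s \ge 1$: if $c = \overline{c}$, then $c_i = 1 - c_i$ for each $i$, which is impossible in $\{0,1\}$. Hence the $2^s$ vectors split into exactly $2^s / 2 = 2^{s-1}$ classes, each a pair $\{c, \overline{c}\}$. This is the ``$2^{s-1}$ opposite pairs'' assertion in the statement.

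With the pairs playing the role of pigeonholes, I would finish by pigeonhole. The matrix $X$ has $t$ columns, each of which is a length-$s$ $0-1$ vector, hence lies in one of the $2^{s-1}$ equivalence classes. If $t > 2^{s-1}$, then two distinct columns $c$ and $d$ must lie in the same class, so $d = c$ (the columns are identical) or $d = \overline{c}$ (the columns are opposite). This is exactly the conclusion.

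I expect no real obstacle here: the argument is a one-line pigeonhole once the pairing is set up, and the entire content is already telegraphed in the statement. The only care needed is the remark that $s \ge 1$ forces each class to have size exactly two, so that the count of pigeonholes is $2^{s-1}$ and not something larger; this is the single step I would state explicitly rather than leave implicit.
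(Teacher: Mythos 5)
Your proof is correct and follows essentially the same pigeonhole argument the paper uses (the observation's statement is itself the paper's proof sketch: $2^s$ vectors partitioned into $2^{s-1}$ opposite pairs, then pigeonhole on the $t$ columns). Your explicit check that no vector equals its own complement, so each pair genuinely has size two, is a worthwhile detail the paper leaves implicit.
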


This observation is the key to proving the result below.

\begin{proposition}\label{prop:detqn}\cite{Boutin2009TheDN}
 For $n \ge 1$, $\det(Q_n) = \lceil \log_2(n) \rceil + 1$.
\end{proposition}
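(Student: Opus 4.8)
The plan is to analyze candidate determining sets through their characteristic matrices and reduce the problem to a counting question about $0-1$ vectors. Fix a vertex subset $S \subseteq V(Q_n)$ with $|S| = s$. Since $Q_n = K_2 \square \cdots \square K_2$ has $n$ prime factors, the characteristic matrix $M(S)$ is an $s \times n$ matrix whose entries lie in $V(K_2)$, i.e.\ a $0-1$ matrix. Because any nonempty subset of $V(K_2)$ is a vertex determining set for $K_2$, every column of $M(S)$ (having $s \ge 1$ entries) automatically contains a determining set for its factor. Hence by Proposition~\ref{prop:charMatrix}, $S$ is a vertex determining set of $Q_n$ if and only if no two columns of $M(S)$ are isomorphic images of one another, which for $K_2$ factors means no two columns are identical and no two are bitwise opposite.

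Next I would reformulate this as a statement about equivalence classes. Declare two $0-1$ column vectors of length $s$ equivalent if they are identical or bitwise opposite. Since a vector and its complement are never equal for $s \ge 1$, each class has exactly two members, so there are $2^s/2 = 2^{s-1}$ classes. The criterion above then says precisely that $S$ is a determining set if and only if the $n$ columns of $M(S)$ lie in $n$ distinct classes, and such a configuration exists if and only if there are at least $n$ classes, i.e.\ $2^{s-1} \ge n$.

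From here both bounds follow. For the lower bound, suppose $|S| = s$ with $2^{s-1} < n$; then $M(S)$ has $s$ rows and $n > 2^{s-1}$ columns, so by Observation~\ref{obs:ZeroOne} two of its columns are identical or opposite, and $S$ fails to be determining. Thus every determining set has size $s$ with $2^{s-1}\ge n$, i.e.\ $s \ge \lceil \log_2 n\rceil + 1$. For the upper bound, set $s = \lceil \log_2 n\rceil + 1$, so that $2^{s-1}\ge n$; choose $n$ representatives from $n$ distinct equivalence classes and take these as the columns of an $s \times n$ matrix. Its $s$ rows are $n$-tuples in $V(Q_n)$ defining a set $S$ of size $s$, which is determining by the criterion above. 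Combining the two bounds yields $\det(Q_n) = \lceil \log_2 n\rceil + 1$; as a sanity check, $n=1$ gives $\det(Q_1)=1$, consistent with $\det(K_2)=1$.

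The routine verifications are the class count and the ceiling arithmetic, namely that $2^{s-1}\ge n \iff s \ge \lceil\log_2 n\rceil + 1$ whether or not $n$ is a power of two. The step requiring the most care is translating the abstract ``isomorphic columns'' condition of Proposition~\ref{prop:charMatrix} into the concrete ``identical-or-opposite'' condition; this hinges on the fact that the only nontrivial automorphism of $K_2$ swaps its two vertices, so an isomorphism matching one column to another must either preserve or flip all bits simultaneously. The other point to confirm is that the freely chosen columns genuinely assemble into a valid vertex set rather than over-constraining the rows, but this is immediate, since any $0-1$ matrix is the characteristic matrix of the set of $n$-tuples given by its rows.
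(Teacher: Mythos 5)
Your proof is correct and follows exactly the route the paper indicates for this result: reduce via Proposition~\ref{prop:charMatrix} to requiring pairwise nonisomorphic (i.e.\ neither identical nor opposite) $0$--$1$ columns, and then count the $2^{s-1}$ identical-or-opposite classes as in Observation~\ref{obs:ZeroOne}, the paper itself deferring the written-out details to Boutin's cited proof. The only point you wave at rather than verify is that the $s$ rows of your constructed matrix are genuinely distinct vertices; this does hold (two equal rows would confine all $n$ columns to only $2^{s-2}<n$ classes), so no actual gap results.
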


For $n=1$, $Q_1 = K_2$, which does not have an determining index.  For $n=2$, $Q_2 = C_4$; by Proposition~\ref{prop:detfamilies}, $\det'(C_4) =  \det(C_4) = 2.$

\begin{theorem}\label{thm:QnSC}
  Let $n \ge 3$.
  Then 
  \[
  \det'(Q_n) = \begin{cases} 
  \lceil \log_2 n  \rceil +1, \quad &\text{ if } n-\lceil \log_2 n \rceil >  2^{\lceil \log_2 n\rceil -1},\\
  \lceil \log_2 n  \rceil, & \text{ otherwise}.
  \end{cases}
  \]
\end{theorem}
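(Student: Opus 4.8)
The plan is to reduce the computation of $\det'(Q_n)$ to a vertex-determining problem on a lower-dimensional subcube and then finish with the counting tool already used for $\det(Q_n)$. Throughout write $k = \lceil \log_2 n \rceil$, and recall that $\aut(Q_n)$ consists of the \emph{signed permutations}: each automorphism has the form $\phi=(\sigma,c)$ with $\sigma\in S_n$ permuting the $n$ coordinates and $c\in\{0,1\}^n$ flipping bits, acting by $\phi(x)_\ell = x_{\sigma^{-1}(\ell)}+c_\ell \pmod 2$. Since $n\ge 3$ gives $\det(Q_n)=k+1>1$ by Proposition~\ref{prop:detqn}, Theorem~\ref{thm:lowerbound} already yields the upper bound $\det'(Q_n)\le k+1$; the real work is deciding between $k$ and $k+1$.

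First I would set up an edge characteristic matrix. Identify each edge of $Q_n$ with a \emph{direction} $d\in\{1,\dots,n\}$ (the single coordinate in which its endpoints differ) together with an endpoint $V$. Given an edge set $T=\{e_1,\dots,e_m\}$ with endpoints $V_i$ and directions $d_i$, let $D=\{d_1,\dots,d_m\}$ be the set of directions used. The key lemma I would prove characterizes when $\phi=(\sigma,c)$ fixes every edge of $T$: mapping an edge to itself forces $\sigma(d_i)=d_i$, so $\sigma$ fixes $D$ pointwise and permutes the complement $D^c$; when $|D|\ge 2$, for each $\ell\in D$ there is an edge whose direction is not $\ell$, and comparing its fixed bits forces $c_\ell=0$; and the leftover data $(\sigma|_{D^c},\,c|_{D^c})$ is free except that it must fix the projection $\pi_{D^c}(V_i)$ of each endpoint onto the non-direction coordinates. (When $|D|=1$ the single flip $c=\mathbf{e}_{d}$ swaps the endpoints of every edge, so such a $T$ is never determining.) Consequently, for $|D|\ge 2$ the automorphisms fixing $T$ correspond bijectively to the automorphisms of the subcube $Q_{n-|D|}$ on the coordinates $D^c$ that fix all the projected endpoints. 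Hence $T$ is an edge determining set if and only if $\{\pi_{D^c}(V_i)\}$ is a vertex determining set of $Q_{n-|D|}$.

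With this reduction in hand, the counting is exactly the argument behind $\det(Q_n)$. By Proposition~\ref{prop:charMatrix}, the projected endpoints determine $Q_{n-|D|}$ iff the $n-|D|$ columns of their characteristic matrix (a $\{0,1\}$ matrix with $m$ rows, namely the restriction of the edge matrix to $D^c$) are pairwise non-isomorphic, i.e.\ no two identical or opposite. By Observation~\ref{obs:ZeroOne} this is possible precisely when $n-|D|\le 2^{m-1}$, and since $|D|\le m$ the necessary condition $n-m\le 2^{m-1}$ must hold for any edge determining set of size $m$; conversely, taking $m$ distinct directions ($|D|=m$) and filling the $D^c$-columns with $n-m$ pairwise non-isomorphic vectors realizes it. Therefore $\det'(Q_n)=\min\{m:\ n-m\le 2^{m-1}\}$.

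It remains to identify this minimum. Setting $f(m)=m+2^{m-1}$, which is increasing, the minimum is the least $m$ with $f(m)\ge n$. Since $n\le 2^k$ we have $f(k+1)\ge 2^k\ge n$, so the minimum is at most $k+1$; and since $n>2^{k-1}$ together with the elementary inequality $k-1\le 2^{k-2}$ (valid for all $k\ge 2$) gives $f(k-1)=(k-1)+2^{k-2}\le 2^{k-1}<n$, the minimum is at least $k$. Thus it equals $k$ exactly when $f(k)\ge n$, that is $n-k\le 2^{k-1}$, and equals $k+1$ otherwise, which is the claimed formula. I expect the only genuine obstacle to be the edge-fixing characterization in the second paragraph: carefully separating the roles of the direction coordinates ($\sigma$ fixes them, and $c$ vanishes on them once $|D|\ge 2$) from the free subcube automorphism on $D^c$, and disposing of the degenerate $|D|=1$ case. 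Everything after that is the established hypercube counting together with routine arithmetic.
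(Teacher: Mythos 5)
Your argument is correct, and although the combinatorial core (Boutin's characteristic-matrix criterion, Observation~\ref{obs:ZeroOne}, and the final arithmetic comparing $n-k$ with $2^{k-1}$) coincides with ours, your handling of the automorphisms is genuinely different. We use only the weaker necessary condition that the endvertices of an edge determining set form a vertex determining set of $Q_n$, and we verify our explicit construction with an ad hoc computation of the distances among the four endvertices of a pair of edges. You instead invoke the full description of $\aut(Q_n)$ as the group of signed permutations $\mathbb{Z}_2^n \rtimes S_n$ and prove an exact characterization: for $|D| \ge 2$, an edge set is determining if and only if the projections of its endpoints onto the non-direction coordinates form a vertex determining set of $Q_{n-|D|}$ (indeed, since $\sigma$ fixes $D$ pointwise and $c$ vanishes on $D$, every edge-fixing automorphism fixes all endpoints pointwise). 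This buys you the clean closed form $\det'(Q_n) = \min\{m : n - m \le 2^{m-1}\}$, subsumes both our lower-bound counting and the verification of the construction in a single lemma, and replaces our appendix inequality with the elementary bound $k-1 \le 2^{k-2}$. The price is an essential reliance on the structure of $\aut(Q_n)$, which you should state and cite explicitly (it is standard, but our proof never needs it). Two small points deserve a sentence in a full write-up: coinciding projections only shrink the row count of the characteristic matrix, which strengthens the necessary condition; and $|D| < m$ makes $n-|D|$ larger, which is again the direction you need, so taking $|D| = m$ in the construction is without loss of optimality.
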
 

\begin{proof}
Assume $T$ is an edge determining set for $Q_n$ with $|T| = \lceil \log_2(n) \rceil - 1$. Let $S$ be the set of endvertices of the edges of $T$. By the proof of Theorem~\ref{thm:upperbound}, $S$ is a vertex determining set. Let $M(S)$ be the corresponding characteristic matrix. By Proposition~\ref{prop:charMatrix}, the columns of $M(S)$ are nonisomorphic. 

Note that the endvertices of each edge in $T$ differ by one bit. Thus there are at most $\lceil \log_2(n) \rceil - 1$ columns such that 
the two rows of $M(S)$ corresponding to the endvertices of a single edge edge of $T$ differ in that columnn.
Let $X$ be the submatrix of $M(S)$ consisting of the other columns,  namely the columns corresponding to bits in which are identical in both endvertices of every edge in $T$. Then $X$ has at least $n - \lceil \log_2(n) \rceil + 1$ columns. Furthermore, $X$ has at most $\lceil \log_2(n) \rceil - 1$ distinct rows, since the endvertices of an edge are identical except for one bit and we removed the columns in which the endvertices differ. 
\medskip

Let $X'$ be the matrix obtained from $X$ by removing any duplicate rows.  Then $X'$ has $s \le \lceil \log_2 n \rceil - 1$ rows and $t \ge n - \lceil \log_2(n) \rceil + 1$ columns. 
We will need the following result, the proof of which is in the appendix.

\begin{lemma}\label{lem:QnAlwaysLess}
For all $n\ge 3$, $n - \lceil \log_2 n \rceil + 1 > 2^{\lceil \log_2 n \rceil - 2}$. 
\end{lemma}

Applying this inequality,
\[
t \ge n - \lceil \log_2(n) \rceil + 1 > 2^{\lceil \log_2 n \rceil - 2} \ge 2^{s-1},
\]
and so by Observation~\ref{obs:ZeroOne},  $X'$ must have some isomorphic columns. Hence so does $X$, and so does $M(S)$. This is a contradiction. Thus $\det'(Q_n) \geq \lceil \log_2(n) \rceil$.

\medskip

 By Corollary~\ref{cor:upperlower}, $\det'(Q_n) \le \det(Q_n) = \lceil \log_2 n \rceil +1$. So the only two options for $\det'(Q_n)$ are $\lceil \log_2 n \rceil$ and $\lceil \log_2 n \rceil + 1$.  We will show that the relative size of $n - \lceil \log_2 n \rceil$ and $ 2^{\lceil \log_2 n \rceil -1}$ decides which option holds.  
 
 \medskip
 
 First assume $n - \lceil \log_2(n) \rceil > 2^{\lceil \log_2(n) \rceil - 1}$. We can repeat the argument given at the beginning of the proof to show that there can be no determining set of size  $\lceil \log_2{n} \rceil$. 
 Using the same logic, we end up with a matrix  $X'$ with $s \le \lceil \log_2 n \rceil$ rows and $t \ge n - \lceil \log_2 n \rceil$ columns. By assumption,
 \[
t \ge n - \lceil \log_2(n) \rceil  > 2^{\lceil \log_2 n \rceil - 1} \ge 2^{s-1},
\]
so the columns of $X'$ cannot be nonisomorphic by Observation~\ref{obs:ZeroOne}.  Hence, in this case, $\det'(Q_n) = \lceil \log_2 n \rceil + 1.$ 
 
 \medskip

Now assume $n - \lceil \log_2 n \rceil \le 2^{\lceil \log_2 n \rceil - 1}$. We will construct an edge determining set $T$ for $Q_n$ of size $\lceil \log_2 n \rceil$. We begin by creating an $\lceil \log_2 n \rceil \times n$ matrix $Y$ whose leftmost $\lceil \log_2 n \rceil$ columns are the standard basis vectors in $\mathbb Z_2^{\lceil \log_2 n \rceil}$ and whose remaining $n - \lceil \log_2 n \rceil$ columns are any set of nonisomorphic $0-1$ columns of length $\lceil \log_2 n \rceil$. For example, when  $n=7$, one possible such matrix is 
\[
Y = \left [\begin{array}{c c c | c c c c} 1 & 0 & 0 & 0 & 1 & 1 & 1 \\ 0 & 1 & 0 & 1 & 0 & 1 & 1\\0 & 0 & 1 & 1 & 1 & 0 & 1 \end{array} \right ].
\]
We then create a corresponding $(2\lceil \log_2 n \rceil) \times n$ matrix $X$ by creating a duplicate copy of each row, then switching only the $1$ that is in the first $\lceil \log_2 n \rceil$ columns to $0$. Continuing with our example, we get
\[
X = \left [\begin{array}{c c c | c c c c} 1 & 0 & 0 & 0 & 1 & 1 & 1 \\ 0 & 0 & 0 & 0 & 1 & 1 & 1 \\ \hline 0 & 1 & 0 & 1 & 0 & 1 & 1\\ 0 & 0 & 0 & 1 & 0 & 1 & 1\\ \hline 0 & 0 & 1 & 1 & 1 & 0 & 1 \\ 0 & 0 & 0 & 1 & 1 & 0 & 1\end{array} \right ].
\]
The first $\lceil \log_2 n\rceil$ columns of $X$ have exactly one $1$. The final $n - \lceil \log_2 n \rceil$ columns of $X$ are still pairwise nonisomorphic, and since each has an even number of $0$' s and an even number of $1$'s, they are also nonisomorphic to any of the first $\lceil \log_2n \rceil$ columns. Hence by Proposition~\ref{prop:charMatrix}, $X$ is the characteristic matrix of a vertex determining set of $Q_n$.

By construction, for each $i \in \{1, \dots , \lceil \log_2 n \rceil\}$, rows $2i-1$ and $2i$ of $X$ differ in exactly one column and therefore they correspond to a pair of adjacent vertices in $Q_n$.  We let $e_i$ be the edge between these vertices. We then let $T= \{e_1, e_2, \dots, e_{\lceil \log_2 n \rceil} \}$. 

\medskip

To show  $T$ is an edge determining set of $Q_n$, suppose $\phi \in \aut(Q_n)$ fixes the edges in $T$.  Then  $\phi$ either fixes or switches the endvertices of each  edge.  To be consistent with the vertex notation in Proposition~\ref{prop:charMatrix}, let $V_i^0$ and $V_i^1$ denote the two endvertices of edge $e_i$, with $V_i^0$ being the endvertex with $0$ in the $i$-th bit.  Let $i$ and $j$ be distinct elements of $\{1, \dots, \lceil \log_2 n \rceil\}$. 
(This is possible because $n \ge 3$, so $\lceil \log_2n \rceil \ge 2$.) 
Suppose the last $n - \lceil \log_2 n \rceil $ columns of the $i$-th and $j$-th rows of $Y$ differ in $\ell$ bits. Using the fact that the distance between two vertices of $Q_n$ is the number of bits in which the two corresponding  bit strings differ, we get
\[
d(V_i^0, V_j^0) = \ell, \quad  d(V_i^0, V_j^1) = d(V_i^1, V_j^0) = \ell+1, \quad  d(V_i^1, V_j^1) = \ell+2.
\]
Since $\phi$ fixes the edges of $T$, $\phi(V_i^0) = V_i^0$ or $V_i^1$ and $\phi(V_j^0) = V_j^0$ or $V_j^1$. Because automorphisms are distance-preserving, it must be the case that 
\[
\ell = d(V_i^0, V_j^0) = d(\phi(V_i^0), \phi(V_j^0)).
\]
This is only possible if $\phi$ fixes the endvertices of both $e_i$ and $e_j$. Thus $\phi$ must fix the endvertices of every edge in $T$.  As noted earlier, these vertices constitute a vertex determining set of $Q_n$ and so $\phi$ must be the identity. Hence $T$ is an edge determining set.
\end{proof}

An interesting implication of Theorem~\ref{thm:QnSC} is that as $n$ increases, $\det'(Q_n)$ fluctuates infinitely many times between the upper bound of $\det(Q_n)$ and the marginally smaller value of $\det(Q_n) - 1$. The graphs in Figure~\ref{fig:GraphComparison} illustrate that  $n - \lceil \log_2 n \rceil$ and $2^{\lceil \log_2 n \rceil - 1}$ keep alternating in relative size as $n$ increases. An alternative formulation of this result may clarify why this happens. 

\begin{figure} 
\center
\includegraphics[scale=.75]{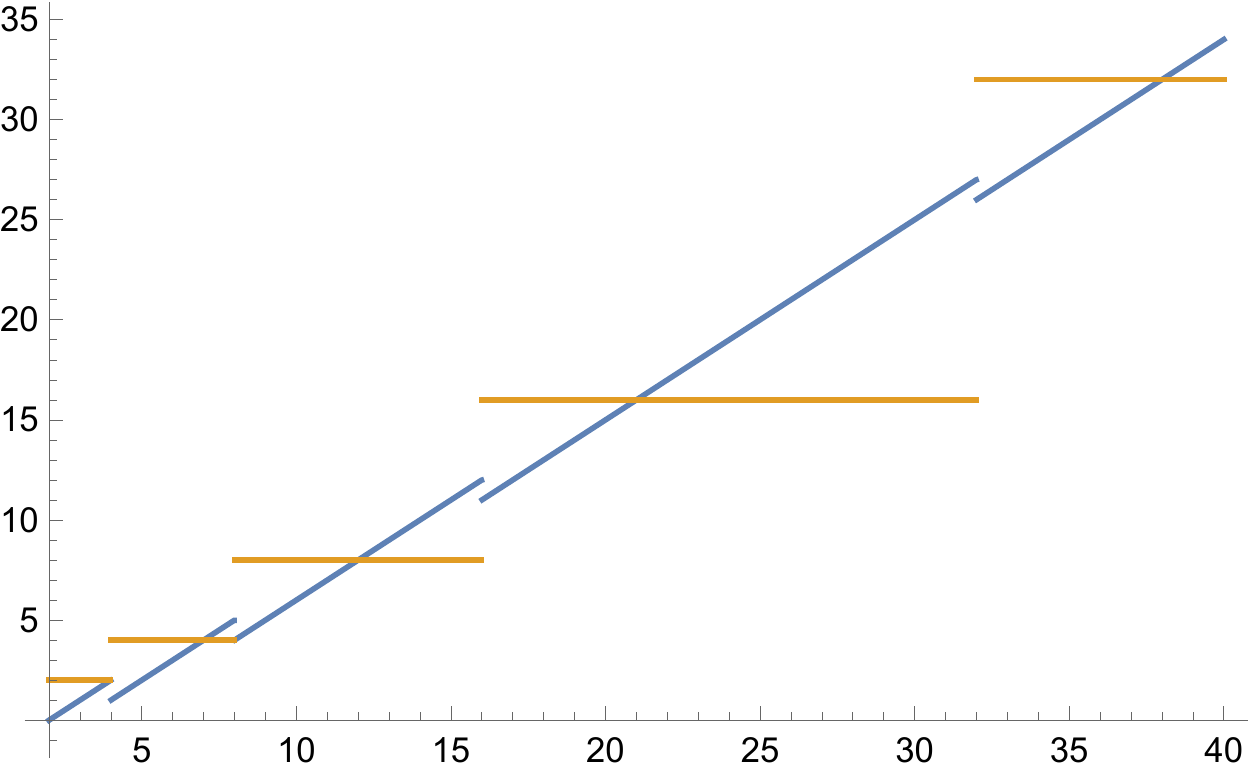}
\caption{Comparing $x- \lceil \log_2 x \rceil$ and $2^{\lceil \log_2 x \rceil -1}$.}
\label{fig:GraphComparison}
\end{figure}


\begin{corollary}
Let $n \ge 3$ and let $k = \lceil \log_2 n \rceil$. Then
\[
  \det'(Q_n) = \begin{cases} 
  k, & \text{ if } 2^{k-1} < n \le 2^{k-1} +k, \\
  k +1, \quad &\text{ if } 2^{k-1} +k < n \le 2^k.
  
  \end{cases}
  \]
\end{corollary}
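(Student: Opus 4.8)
The plan is to observe that this corollary is nothing more than an algebraic restatement of Theorem~\ref{thm:QnSC}, obtained by unpacking the abbreviation $k = \lceil \log_2 n \rceil$ and using the defining property of the ceiling function. First I would recall that, by definition of $k = \lceil \log_2 n \rceil$, we have $k - 1 < \log_2 n \le k$, which exponentiates to
\[
2^{k-1} < n \le 2^k.
\]
Thus $n$ automatically lies in the interval $(2^{k-1}, 2^k]$, and the only remaining task is to locate the threshold dividing the two cases of Theorem~\ref{thm:QnSC} within this interval.

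Next I would rewrite the threshold condition appearing in Theorem~\ref{thm:QnSC}. There, $\det'(Q_n) = \lceil \log_2 n \rceil + 1 = k+1$ precisely when $n - \lceil \log_2 n \rceil > 2^{\lceil \log_2 n \rceil - 1}$; substituting $k$ and rearranging, this is exactly the inequality $n > 2^{k-1} + k$. The complementary case, $\det'(Q_n) = k$, therefore corresponds to $n \le 2^{k-1} + k$. Intersecting each of these with the range $2^{k-1} < n \le 2^k$ coming from the ceiling definition yields the two subintervals $2^{k-1} < n \le 2^{k-1} + k$ and $2^{k-1} + k < n \le 2^k$ stated in the corollary.

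The one small point requiring verification is that the threshold $2^{k-1} + k$ genuinely falls inside $(2^{k-1}, 2^k]$, so that both subintervals are meaningful and together they partition $(2^{k-1}, 2^k]$. The lower end is immediate since $k \ge 1$. For the upper end I would check the elementary inequality $2^{k-1} + k \le 2^k$, equivalently $k \le 2^{k-1}$, which holds for all $k \ge 1$ (and in particular for $k \ge 2$, which is forced by $n \ge 3$). With this confirmed, the two subintervals are disjoint and their union is exactly $(2^{k-1}, 2^k]$, so the case split is exhaustive and the corollary follows directly from Theorem~\ref{thm:QnSC}. I expect no genuine obstacle here: the entire argument is routine manipulation of inequalities, the only subtlety being the bookkeeping needed to confirm that the threshold sits in the correct range so that the reformulated cases coincide with those of the theorem.
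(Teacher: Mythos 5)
Your proposal is correct and follows essentially the same route as the paper's proof: both rearrange the condition $n - \lceil \log_2 n\rceil > 2^{\lceil \log_2 n\rceil - 1}$ from Theorem~\ref{thm:QnSC} into $n > 2^{k-1}+k$ and use $k \le 2^{k-1}$ to place the threshold inside the interval $(2^{k-1}, 2^k]$ determined by $k = \lceil \log_2 n\rceil$. No issues to report.
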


\begin{proof}
Since $k\le 2^{k-1}$ for all $k \in \mathbb N$, both $n$ and $2^{k-1} + k$ lie in the interval $(2^{k-1}, 2^k]$.
Either they are equal, or one is to the left of the other.
If $n \le 2^{k-1} + k$, then 
$
n - \lceil \log_2 n \rceil = n - k \le 2^{k-1} = 2^{\lceil \log_2 n \rceil -1},
$
and so $\det'(Q_n) = \lceil \log_2 n \rceil$ by Theorem~\ref{thm:QnSC}. 
By similar reasoning, if  $n > 2^{k-1} + k$, then 
$\det'(Q_n) = \lceil \log_2 n \rceil + 1$. 
\end{proof}

\section{Open Questions}

 In this paper, we have provided the determining index for several families of graphs, including paths, cycles, complete graphs, complete bipartite graphs, trees and hypercubes. There are many more families of symmetric graphs for which this parameter could be computed, such as generalized Petersen graphs, circulant graphs, orthogonality graphs, Mycielskian graphs, Paley graphs and Praeger-Xu graphs, to name a few. In addition, there are some more conceptual open questions.

\begin{enumerate}
  \item We have shown that in some cases, the determining index of a graph is strictly smaller than the determining number. In these cases, the number of edges that are needed to fix in order to break all symmetries of the graph is less than the number of vertices that would be required, suggesting that fixing edges is more efficient. Are there necessary and/or sufficient conditions on $G$ that guarantee $\det(G)=\det'(G)$?
  \item By the proof of Theorem~\ref{thm:QnSC}, the edge set 
  \[T = \{\{(1,0,0,0),(0,0,0,0)\},\{(0,1,1,0),(0,0,1,0)\}\}\]
  is a minimum edge determining set for $Q_4$. The set of endvertices of the edges of $T$ is a vertex determining set but not minimum by Proposition~\ref{prop:detqn}.  The characteristic matrices of all possible subsets with one end vertex removed are
  \[
\left [\begin{array}{c c c c} 1 & 0 & 0 & 0 \\ 0 & 0 & 0 & 0 \\0 & 1 & 1 & 0  \end{array} \right ],
\left [\begin{array}{c c c c} 1 & 0 & 0 & 0 \\ 0 & 0 & 0 & 0 \\0 & 0 & 1 & 0  \end{array} \right ],
\left [\begin{array}{c c c c} 0 & 0 & 0 & 0 \\ 0 & 1 & 1 & 0 \\0 & 0 & 1 & 0  \end{array} \right ],
\left [\begin{array}{c c c c} 1 & 0 & 0 & 0 \\ 0 & 1 & 1 & 0 \\0 & 0 & 1 & 0  \end{array} \right ].
\]
  Each of these characteristic matrices has two isomorphic columns, so by Proposition~\ref{prop:charMatrix} they are not vertex determining sets of $Q_4$. Since $\det(Q_4) = 3$, we have found a minimum edge determining set whose set of endvertices does not contain a minimum vertex determining set. Do there exist conditions such that the set of endvertices of the edges in a minimum edge determining set necessarily contains a minimum vertex determining set? 
  \item Are there expressions for $\det'(G+H)$ and  $\det'(G\square H)$ in terms of $\det'(G)$ and $\det'(H)$? 
 
\end{enumerate}

\begin{appendices} 
\section{Proof of Lemma~\ref{lem:QnAlwaysLess}}

{\bf Lemma~\ref{lem:QnAlwaysLess}.}
For all $n \ge 3$, $2^{\lceil \log_2 n \rceil - 2} < \frac{n}{2} \le n - \lceil \log_2 n \rceil +1$.

\begin{proof}
By definition of the ceiling function,  $\lceil \log_2 n \rceil - 1 < \log_2 n \le \lceil \log_2 n \rceil$ for any $n \in \mathbb N$. Then $2^{\lceil \log_2 n \rceil - 1} < 2^{\log_2 n} = n$, because exponential functions are increasing. 
Dividing  by $2$ gives $2^{\lceil \log_2 n \rceil - 2} < \frac{n}{2}$. 

\medskip

Next, let $f(x) = \log_2 x - \frac{x}{2}$, for $0 < x \in \mathbb R$. It is easily verified that the only zeroes of $f$ occur at $x = 2$ and $x=4$. Elementary calculus can be used to show that the graph $y = f(x)$ is concave down, with the absolute maximum value at $x = \frac{2}{\ln 2} \approx 2.885$. This means that for all $x \ge 4$, $f(x) \le 0$, which implies $\log_2 x \le \frac{x}{2}$. Hence, for all $n \ge 4$
\[
\lceil \log_2 n \rceil - 1 < \log_2 n \le \frac{n}{2},
\]
which can be algebraically rearranged to 
\[
\frac{n}{2} \le n - \lceil \log_2 n \rceil +1.
\]
A direct calculation shows that this inequality also holds for $n = 3$.
\end{proof}

\end{appendices}

\newpage

\bibliography{References}{}
\bibliographystyle{plain}

\end{document}